\def\oo{\"{o}}
\theoremstyle{plain}
\newtheorem{theorem}{Theorem}[section]
\newtheorem{lemma}[theorem]{Lemma}
\newtheorem{corollary}[theorem]{Corollary}
\theoremstyle{definition}
\newtheorem{definition}[theorem]{Definition}
\newtheorem{remark}[theorem]{Remark}
\newcommand{\N}{\mathbb{N}}
\def\on{\mathrm{On}}
\def\seq{\mathrm{\N^{<\N}}}
\def\klo{\mathcal O}
\def\vp{\varphi}
\DeclareMathOperator{\thm}{\mathrm{Thm}}
\newcommand{\md}[1]{ \mathrm{mod}\ #1}
\DeclareMathOperator{\weak}{\mathrm{weak}}
\newcommand{\kl}[1]{\{#1\}} 
\def\xvdash#1#2{
	\setbox1=\hbox{\kern1.5pt$\scriptstyle#2$}
	\ifx\zeichen\empty\setbox0=\hbox to .75em{}\else\setbox0=\hbox
	{\kern1.5pt$\scriptstyle#1$}\fi
	\dimen1=\dp0 \ifdim \dimen1=0pt
	\advance \dimen1 by 1.5ex \else \advance \dimen1 by 1.2ex
	\fi\dimen3=2ex\dimen4=.5ex\ifdim \wd0<\wd1 \dimen2=\wd1 \else \dimen2=\wd0
	\fi\hbox{\hskip.5em$\vrule height\dimen3
		depth\dimen4\raise\dimen1\copy0\hskip-1\wd0
		\lower\ht1\copy1\hskip-1\wd1\vrule width\dimen2 height.7ex depth-.6ex
		\hskip3pt minus1.5pt$}}
\def\rul#1#2#3{\prooftree     #1 \justifies #2 \using{#3} \endprooftree}
\def\ptree#1{\prooftree  #1 \endprooftree}
\def\j{\justifies}
\def\ax{\text{Ax}}
\def\cut{cut}
\def\orule{\omega\text{-}rule}
\def\rep{rep}
\def\orep{\omega\text{-}rep}
\def\Crec{\mathcal{C}_{rec}}
\def\Cpr{\mathcal{C}_{prec}}
\def\Irec{\mathcal{I}_{rec}}
\def\Ipr{\mathcal{I}_{prec}}
\newcommand{\ha}{{\sf HA}}                         % Heyting Arithmetic
\newcommand{\pa}{\ensuremath{\mathsf{PA}}}         % Peano Arithmetic (Pudlak)
\def\lim{\mathrm{Lim}}
\newcommand{\conc}{{{}^\smallfrown}}            % concatenation
\DeclareMathOperator{\Rule}{\mathrm{Rule}}
\DeclareMathOperator{\End}{\mathrm{End}}
\title{Completeness of  the primitive recursive $\omega$-rule}
\thanks{The author was supported by John Templeton Foundation (\lq\lq A new dawn of Intuitionism: Mathematical and Philosophical advances\rq\rq, grant ID 60842).}
\author[Frittaion]{Emanuele Frittaion}
\address{School of Mathematics, University of Leeds, UK}
\keywords{Shoenfield's completeness theorem, $\omega$-rule, primitive recursion theorem, Kleene's $\klo$, $\Pi^1_1$ completeness}
\begin{document}

\subjclass[2010]{03D20, 03D70, 03F03} 

\begin{abstract} 
Shoenfield's completeness theorem (1959) states that every true first order arithmetical sentence has a recursive $\omega$-proof encodable by using recursive applications of the $\omega$-rule. For a suitable encoding of Gentzen style $\omega$-proofs, we show that Shoenfield's completeness theorem applies to cut free $\omega$-proofs encodable by using primitive recursive applications of the $\omega$-rule. We also show that the set of codes of $\omega$-proofs, whether it is based  on recursive or primitive recursive applications of the $\omega$-rule, is $\Pi^1_1$ complete. The same $\Pi^1_1$ completeness results apply to codes of cut free $\omega$-proofs.
\end{abstract}

\maketitle
\tableofcontents

\section{Introduction}
The $\omega$-rule 
\[   \rul{A( 0)\ \ A( 1)\ \  \cdots\ \  A(n)\ \   \cdots }{\forall xA(x)}{\orule} \]
has been subject to  proof theoretic investigations since the 1930s. For a detailed account on the $\omega$-rule, including  historical references,  see Sundholm \cite{S83}. The most important application related to the $\omega$-rule is probably due to Sch\"{u}tte \cite{S50} with his ordinal analysis of Peano arithmetic (subsequently extended to ramified analysis) via cut elimination for $\omega$-arithmetic. Another milestone is Shoenfield's completeness theorem \cite{Sho59}, which asserts that the recursive $\omega$-rule is  complete for  true  arithmetical statements. (That this holds good for the  unrestricted $\omega$-rule is almost trivial, by induction on the build up of a true sentence.)
Shoenfield's result is  essential for another completeness result, this time due to Feferman \cite{F62}, asserting the completeness for true arithmetical statements of certain transfinite recursive sequences (progressions) of arithmetic theories based on iterated reflection principles. 

%Pre-G\oo del: Tarski (talk in Warsaw 1927), Hilbert (1930), Carnap (1935). 
%Completeness: Rosser (1937),  Shoenfield 1959.  
%Reflection principles: Kreisel (1961), Feferman 1964, Kreisel-L\'{e}vy 1968, Schmerl 1979.
%Cut elimination: Novikov 1943, Sch\"{u}tte 1951.
%Model theory: Henkin 1954, Orey 1956, Mostowski et all 1959.

An $\omega$-proof is naturally represented as either a countable transfinite sequence of formulas (Hilbert style) or a countable well founded  infinite tree of sequents (Gentzen style). In both cases, we are dealing with a second order object (countable set).  Shoenfield's completeness theorem refers to an inductive definition of  G\"{o}del numbers of Hilbert style $\omega$-proofs. The coding is based on recursive applications of the $\omega$-rule. Roughly,  the index of a recursive enumeration  $f$ of G\"{o}del numbers  $f(n)$ of  $\omega$-proofs of $A(n)$  is the G\"{o}del number of an $\omega$-proof of  $\forall xA(x)$. The coding  is \emph{local} as opposed to \emph{global}. The latter involves indices of recursive $\omega$-proofs. We shall refer to local codes based on recursive applications of the $\omega$-rule as to recursive local codes. Then Shoenfield's completeness theorem  says that an arithmetical sentence $A$ is true if and only if there is a recursive local code of an $\omega$-proof of $A$.  As expected, a recursive local code is the code of a recursive $\omega$-proof. Indeed, an $\omega$-proof is recursive  if and only if it has a recursive local code.

It is natural to ask whether, under a suitable local coding of Gentzen style $\omega$-proofs, the same completeness result applies to codes of cut free $\omega$-proofs. For mere convenience, we will consider Tait one-sided sequent calculus. Sequents are finite sets of sentences $\Gamma$ to be interpreted disjunctively.  Let $\Crec$ be the set of recursive local  codes of $\omega$-proofs based on Tait one-sided sequent calculus (Definition \ref{rec codes}).  By implementing the \emph{proof-search} completeness proof of $\omega$-arithmetic, one can see that this is the case. Therefore an arithmetical sentence $A$ is true if and only if there is a local code in $\Crec$ of a cut free $\omega$-proof of $A$  (cf.\ Theorem \ref{shoenfield}). 

It is also natural to ask what the complexity of $\Crec$ is. We show, not surprisingly, that this set is as complicated as possible, that is, $\Pi^1_1$ complete (Theorem \ref{thm rec complete}).  The author was not able to find this result in the literature.  It turns out that a very small cut free fragment of $\Crec$ is already $\Pi^1_1$ complete. Indeed, we just need the $\omega$-rule.  

Note that by implementing the \emph{proof-search} completeness  proof of higher order $\omega$-arithmetic (for languages with at least second order objects) with respect to $\omega$-models, one obtains in a rather neat way that the set of indices (global codes) for  primitive recursive higher order $\omega$-proofs is $\Pi^1_1$ complete (cf.\ Girard \cite{G87}).  More precisely, given a $\Pi^1_1$ formula $A(x)$, there exists a primitive recursive function $f$ such that $A(\bar n)$ holds iff $f(n)$ is the  index  of a primitive recursive  cut free $\omega$-proof of $A(\bar n)$. 
%The proof of the last result is completely analogous to the proof of Theorem \ref{shoenfield}. 
(One has to add logical axioms of the form $\Gamma,t\in X,s\notin X$, where $s$ and $t$ are closed terms with the same numerical value.)

What about the primitive recursive $\omega$-rule? Let $\Cpr$ be the set of local codes based on primitive recursive applications of the $\omega$-rule. We shall refer to local codes in $\Cpr$ as to primitive recursive local codes. By applying the conversion procedures between recursive and primitive recursive codes from Sundholm's \cite{S83}, one obtains, by a detour to global codes,  that an arithmetical sentence $A$ is true if and only if there is a primitive recursive local code of a cut free $\omega$-proof of $A$ containing vacuous applications of the $\omega$-rule
\[ \rul{\\ \ \Gamma\ \  \cdots \ \  \Gamma \ \  \cdots}{\Gamma}{}. \]
However, the question remains if one can obtain a primitive recursive local code of an $\omega$-proof without vacuous rules. The first approach consists in eliminating the use of vacuous rules in favour of cuts in the conversion between local and global codes (Theorem \ref{pr index to pr code} is a case in point). This leads to the completeness for true arithmetic of $\Cpr$, but it does not answer the question for the cut free fragment of $\Cpr$. We will answer in the affirmative by providing  a many-one reduction of $\Crec$ to $\Cpr$ that preserves all logical rules and in particular preserves cut free $\omega$-proofs (Theorem \ref{thm pr complete}). As a byproduct, we obtain that also (the cut free fragment of) $\Cpr$ is $\Pi^1_1$ complete.

%Similarly, we shall refer to global codes as to recursive and primitive recursive indices.  
It is important to stress that the correspondence between  $\omega$-proofs and local codes breaks down for subrecursive applications of the $\omega$-rule.  A primitive recursive local code, namely  a local code in $\Cpr$, need not correspond to a primitive recursive $\omega$-proof, and vice versa. See Section \ref{index vs code} for a detailed comparison between local and global codes.

\subsection{First order classical $\omega$-arithmetic}
We consider a Tait one-sided sequent calculus. Formulas are in negation normal form (formulas are obtained from literals, consisting of atomic and negated atomic formulas in the language $0,1,+,\times,=$, by using $\land,\lor,\forall,\exists$). Sequents $\Gamma,\Delta,\ldots$ are finite sets of sentences. The interpretation of $\Gamma$ is $\bigvee\Gamma$. \smallskip
\begin{center} \textcolor{blue}{Logical axioms} \end{center} \smallskip 
\[ \Gamma, A \ \ \ \ \ A \text{ closed true literal}   \]
\smallskip
\begin{center} \textcolor{blue}{Logical rules}  \end{center} \smallskip
\[
\begin{array}{cc}
%   and
\begin{prooftree}
\Gamma, A \ \ \ \Gamma, B 
\justifies
\Gamma, A\land B \using{\land}
\end{prooftree}
&    
%   or
\begin{prooftree}
\Gamma, A, B
\justifies
\Gamma,A\lor B
\using{\lor}
\end{prooftree}
\\[8mm]
%   omega
\begin{prooftree}
\{\Gamma, A(\bar n)\colon n\in\omega\}\ 
\justifies
\Gamma, \forall xA(x)
\using{\orule}
\end{prooftree} \qquad
& \qquad  
%   exists 1
\begin{prooftree}
\Gamma, A(\bar n)  
\justifies
\Gamma, \exists xA(x)  
\using{\exists}
\end{prooftree}
\\[8mm]
\end{array} \]

\[   \begin{prooftree}
\Gamma, C
\quad
\Gamma, \neg C 
\justifies 
\Gamma  \using{\cut}
\end{prooftree}\]
\smallskip

An $\omega$-proof   is a well founded tree consisting of sequents, which is \emph{locally correct} with respect to axioms and inference rules.  That is, every top-node sequent is an axiom and  every branching in the tree corresponds to a correct inference. 

More formally, we define an $\omega$-proof as follows. Let $\seq\subseteq\N$ be a (primitive recursive) set of G\oo del numbers of finite sequences of natural numbers.
%also denoted $\N^{<\N}$. 
We denote sequences by $\sigma,\tau,\ldots$  Let $\sigma\conc\tau$ be the concatenation of $\sigma$ with $\tau$. Write $\sigma\conc i$ for $\sigma\conc \langle i\rangle$. A tree $T$ is a subset of $\seq$ closed under initial segments. A tree $T$ is well founded if there is no function $f\colon \N\to \N$ such that $\bar fn\in T$ for every $n$, where $\bar fn=\langle f(0),\ldots,f(n-1)\rangle$. For convenience,  assume $0\notin\seq$. 

\begin{definition}\label{omega proof}
	A total function $\pi\colon \N\to\N$ is an $\omega$-proof if
	\begin{itemize}
		\item $T_\pi=\{ \sigma\in\seq\colon \pi(\sigma)\in\seq\}$ is a well founded tree, 
		\item $\pi(\sigma)\notin\seq$ implies $\pi(\sigma)=0$,
		\item $\sigma\conc j \in T_\pi$ implies $\sigma\conc i\in T_\pi$ for every $i<j$,
		\item if $\sigma\in T_\pi$ is a top-node, then $\pi(\sigma)=\langle \ax,\Gamma\rangle$, where $\Gamma$ is an axiom, $\Rule(\pi(\sigma))=\ax$, and $\End(\pi(\sigma))=\Gamma$,
		\item if $\sigma\in T_\pi$ is not a top-node, then $\pi(\sigma)=\langle R,\Gamma, A\rangle$, where $R\in\{\land,\lor,\omega,\exists,\cut\}$, $\Rule(\pi(\sigma))=R$,   \[ \End(\pi(\sigma))=\begin{cases} \Gamma\cup A & \text{ if }R\neq\cut \\ \Gamma & \text{ otherwise} \end{cases},\]  and
		\[ \rul{\{\End(\pi(\sigma\conc  i)) \colon \sigma\conc i\in T_\pi\}}{\End(\pi(\sigma))}{R} \]
		is a correct inference.
	\end{itemize}	
\end{definition}
For technical reasons,  it is convenient to encode the side formulas (context) $\Gamma$ and the main formula (or the cut formula) $A$  of the last rule $R$.   Therefore the end sequent is $\Gamma,A$ if $A$ is the main formula, and $\Gamma$ otherwise. The idea is that we want to read off the rule and the premises of an inference from the conlusion. For instance, in the case of the $\omega$-rule, if we simply encode the conclusion by a sequence of the form $\langle \omega, \Gamma,\forall xA(x)\rangle$, where $\omega$ is a conventional name for the $\omega$-rule, the second component is the end sequent, and the third component is the main formula, we cannot decide what the premises $(\Gamma_n)_{n\in\omega}$  are.  In fact, either $\Gamma_n=\Gamma,A(\bar n)$ for all $n$ or $\Gamma_n=\Gamma,\forall x A(x),A(\bar n)$ for all $n$.

\subsection{Primitive recursion theorem}
Let $\{\kl e\colon e\in\omega\}$ be a standard numbering of the partial recursive functions. We will use the recursion theorem  in the following form.\footnote{Kleene in \cite[p.\ 75]{K58} states the primitive recursion theorem with respect to a standard numbering $\{[e]\colon e\in\omega\}$ of all primitive recursive functions. That is, for every primitive recursive function $g(x,x_1,\ldots,x_n)$ there is an index $e$ such that for all $x_1,\ldots,x_n$
	\[    [e](x_1,\ldots,x_n)=g(e,x_1,\ldots,x_n). \]}
\begin{theorem}
	For every primitive recursive function $g(x,x_1,\ldots,x_n)$ there is a primitive recursive function $f$ of index $e$ such that for all $x_1,\ldots,x_n$
	\[    \kl e(x_1,\ldots,x_n)=g(e,x_1,\ldots,x_n). \]
\end{theorem}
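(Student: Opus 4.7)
The plan is to deduce this from the $s$-$m$-$n$ theorem via the classical diagonal construction, while being careful that the resulting function is genuinely primitive recursive and not merely computed by the index $e$.

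First I would invoke the $s$-$m$-$n$ theorem in its standard form: there is a primitive recursive function $s(a,b)$ such that for all $a,b,x_1,\ldots,x_n$
\[ \kl{s(a,b)}(x_1,\ldots,x_n)=\kl{a}(b,x_1,\ldots,x_n). \]
Next, since $g$ and $s$ are both primitive recursive, the composition
\[ h(a,x_1,\ldots,x_n)=g(s(a,a),x_1,\ldots,x_n) \]
is primitive recursive. Pick any index $b$ of $h$ in the partial recursive numbering, so that $\kl{b}(a,x_1,\ldots,x_n)=g(s(a,a),x_1,\ldots,x_n)$.

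Now set $e:=s(b,b)$. Specialising the $s$-$m$-$n$ identity to this choice yields
\[ \kl{e}(x_1,\ldots,x_n)=\kl{s(b,b)}(x_1,\ldots,x_n)=\kl{b}(b,x_1,\ldots,x_n)=g(s(b,b),x_1,\ldots,x_n)=g(e,x_1,\ldots,x_n), \]
which is exactly the desired fixed-point equation. Because $e$ is a single fixed numeral and $g$ is primitive recursive, the function $x_1,\ldots,x_n\mapsto g(e,x_1,\ldots,x_n)$ is itself primitive recursive; hence $\kl{e}$ agrees with a primitive recursive function, and this function may be taken as the desired $f$.

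There is no real obstacle here, but the key subtlety, which is also the heart of Kleene's original argument, is that $h$ must be chosen inside the primitive recursive class \emph{before} performing the diagonalisation $a\mapsto s(a,a)$. This is what guarantees that $\kl{e}$, viewed as a function of $(x_1,\ldots,x_n)$, is primitive recursive and not merely partial recursive, so that the conclusion strengthens the ordinary partial recursive fixed-point theorem to the primitive recursive level asserted in the statement.
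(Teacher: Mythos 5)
Your proof is correct and is essentially the paper's argument: the paper simply invokes the recursion theorem, and you have unfolded its standard $s$-$m$-$n$ diagonal proof, together with the same closing observation that once $e$ is a fixed numeral, $\lambda x_1\ldots x_n.\,g(e,x_1,\ldots,x_n)$ is primitive recursive and can serve as $f$. (The only quibble is in your last paragraph: what makes $\kl e$ primitive recursive is that $g$ is primitive recursive and $e$ is fixed, not the prior choice of $h$ inside the primitive recursive class; for the fixed-point identity itself an arbitrary index $b$ of the total function $h$ suffices.)
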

\begin{proof}
By the recursion theorem. 
\end{proof}

\subsection{s-m-n theorem} We will repeatedly use the s-m-n theorem in the following form.
\begin{theorem}
Let $\vp(x,x_1,\ldots,x_n)$ be a partial recursive function. Then there is a primitive recursive function $f$ such that 
\[   f(x_1,\ldots,x_n)=\Lambda x.\vp(x,x_1,\ldots,x_n). \]
That is, for all $x_1,\ldots,x_n$
\[  \kl{f(x_1,\ldots,x_n)}(x)\simeq \vp(x,x_1,\ldots,x_n). \]
\end{theorem}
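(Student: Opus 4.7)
My plan is to derive this from the standard form of the s-m-n theorem for the numbering $\{\kl e : e \in \N\}$. First, since $\vp$ is partial recursive, by definition of the numbering there is an index $e_0$ with $\kl{e_0}=\vp$. So the task reduces to producing a primitive recursive $f$ such that
\[ \kl{f(x_1,\ldots,x_n)}(x) \simeq \kl{e_0}(x,x_1,\ldots,x_n) \]
for all $x,x_1,\ldots,x_n$.

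The second step is to invoke the classical s-m-n property of the fixed standard numbering: there is a primitive recursive function $S=S^n_1$ such that
\[ \kl{S(e,y_1,\ldots,y_n)}(x) \simeq \kl{e}(x,y_1,\ldots,y_n) \]
for all $e,y_1,\ldots,y_n,x$. The construction of $S$ is routine: on input $(e,y_1,\ldots,y_n)$ it outputs a G\oo del number of a program that, on input $x$, hard-codes the constants $y_1,\ldots,y_n$ (using primitive recursively produced \textquotedblleft constant programs\textquotedblright) and then invokes the universal simulator on $(e,x,y_1,\ldots,y_n)$. Both operations are expressible as primitive recursive transformations on indices, so $S$ is primitive recursive.

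Finally, I would set $f(x_1,\ldots,x_n) := S(e_0,x_1,\ldots,x_n)$, which is primitive recursive as a composition of $S$ with the constant $e_0$ and projections. Then $\kl{f(x_1,\ldots,x_n)}(x) \simeq \kl{e_0}(x,x_1,\ldots,x_n) \simeq \vp(x,x_1,\ldots,x_n)$, as required. There is no genuine obstacle: the entire content is the s-m-n theorem, and the only point that needs minor care is the convention on argument order (whether the fixed parameters are put first or last), which is resolved by a harmless primitive recursive rearrangement of indices. The one external assumption being used is that the standard numbering $\{\kl e\}$ admits such a primitive recursive substitution function, a property shared by all reasonable G\oo del numberings but which must be in place for the chosen one.
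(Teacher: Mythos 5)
Your proposal is correct: the paper states this result without proof, as it is precisely the s-m-n theorem for the standard numbering $\{\kl e\colon e\in\omega\}$, and your derivation (pass to an index $e_0$ of $\vp$, then apply the primitive recursive substitution function $S^n_1$) is exactly the standard argument the paper implicitly relies on. Your closing caveat is also the right one to flag: the primitive recursiveness of $f$ rests on the chosen standard numbering admitting a primitive recursive s-m-n function, which holds for all the usual G\"{o}del numberings.
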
 

%Note that, e.g., the function $f$ defined by 
%\[ f(e,x)=
%\begin{cases} \kl e(x) \ \  \text{if } $e$ \text{ is an index of a primitive recursive function}\\
%0 \ \ \ \text{ otherwise} 
%\end{cases}
%\]
%is total recursive but not primitive recursive  (diagonal argument). 

%A word of caution. When using the recursion theorem to define a primitive recursive function of index $e$, one can use the index $e$, but in general cannot compute $\kl e(n)$. 

%\begin{example} By the recursion theorem there is an index $e$ of a partial recursive function such that:\[ \kl e(x,y) = \begin{cases} &  y+1 \ \ \ \ \text{ if } x=0\\ &   \kl e(x-1,1) \ \ \ \text{ if } y=0 \\ &\kl e(x-1,\kl e(x,y-1))\ \ \ \text{otherwise} \\ \end{cases} \] $e$ is an index of the P\'{e}ter-Ackermann function. \end{example}  

\subsection{Kleene's system $\klo$}
The set $\klo$ of notations for constructive ordinals is given by the set $\{ a\colon a<_\klo 2^a\}$ where the relation $<_\klo$ is inductively defined by:
\begin{itemize}
	\item $1<_\klo 2$;
	\item if $a<_\klo b$ then $b<_\klo 2^b$;
	\item if $\kl e$ is total and $\kl e(n)<_\klo \kl e(n+1)$ for every $n$, then  $\kl e(n)<_\klo 3\cdot 5^e$ for every $n$;
	\item if $a<_\klo b$ and $b<_\klo c$, then $a<_\klo c$.
\end{itemize}
The relation $<_\klo$ is a well founded partial ordering and for every $b\in\klo$ the initial interval $\{a\colon a<_\klo b\}$ is linearly ordered by $<_\klo$.  The set $\klo$ is  $\Pi^1_1$ complete (Kleene).
% in that every $\Pi^1_1$ set is many-reducible to $\klo$. Every $\Sigma^1_1$ subset $X$ of $\klo$ is bounded (Spector 1955), namely, there exists $a\in\klo$ such that $|x|\leq |a|$ for every $x\in X$. 

We will need another characterization of $\klo$. Inductively define a recursively enumerable relation $<'_\klo$ by:
\begin{itemize}
	\item $1<'_\klo 2$;
	\item  if $a<'_\klo b$ then $b<'_\klo 2^b$; 
	\item if $\kl e(n)$ is defined then $\kl e(n)<'_\klo 3\cdot 5^e$;
	\item if $a<'_\klo b$ and $b<'_\klo c$, then $a<'_\klo c$;
\end{itemize}
Finally, inductively define $\klo$  by:
\begin{itemize}
	\item $1\in\klo$;
	\item  if $a\in\klo$ then $2^a\in\klo$;
	\item if $\kl e$ is total and $\kl e(n)<'_\klo \kl e(n+1)$ for every $n$, then $3\cdot 5^e\in\klo$.
\end{itemize}
Note that the recursively enumerable relation $<'_\klo$ coincides with $<_\klo$ on $\klo$.

Define the length (order type) $|a|\in\on$ of $a\in\klo$ by 
\[    |1|=0, \]
\[    |2^a|=|a|+1, \]
\[    |3\cdot 5^e|=\sup\{ |\kl e(n)|\colon n\in\omega\}. \]

\section{Shoenfield recursive $\omega$-rule}
We describe a notation system for $\omega$-proofs based on Tait sequent calculus (see Definition \ref{omega proof}).\footnote{Cf.\ Feferman \cite[Definition 5.7, p.\ 302]{F62}, where codes are based on a Hilbert style presentation of first order $\omega$-arithmetic.}  The set $\Crec$ of recursive local codes of $\omega$-proofs is a typical example of inductive definition (cf.\ $\klo$). It follows that $\Crec$ is $\Pi^1_1$.
%Assume a G\"{o}del numbering of formulas $\#(A)$. We identify $\#(A)$ with $A$. Similarly for sequents.
%Fix a coding of $(\land,a,b)=2\cdot 3^a\cdot 5^b$, $(\lor,a)=4\cdot 3^a$, $(\forall,e)=8\cdot 3^e$, $(\exists,a)=16\cdot 3^a$, $(\text{cut},a,b)= 32\cdot 3^a\cdot 5^b$. 
%\begin{definition} $P$ consists of pairs $\langle a, \Gamma\rangle$, where  $\Gamma$ is a closed sequent (finite set of sentences). $P$ is the smallest subset of $\N$ such that \begin{itemize} \item[(axiom)]  For every axiom $\Gamma$, $\langle 0, \Gamma\rangle \in P$ \item[($\land$)] if $\langle a,\Gamma\cup A\rangle$ and $\langle b,\Gamma\cup B\rangle$ are in $P$, then  $\langle (\land,a,b), \Gamma\cup A\land B\rangle$ is in $P$ \item[($\lor$)] if $\langle a,\Gamma\cup A\cup B\rangle\in P$ then $\langle (\lor,a),\Gamma\cup(A\lor B)\rangle\in P$ \item[($\omega$-rule)] if $e$ is an index of a total recursive function and  $\langle \kl e(x), \Gamma\cup A(\bar x)\rangle\in P$ for every $x$, then  $\langle (\forall,e), \Gamma\cup\forall xA(x)\rangle\in P$ \item[($\exists$)] if $\langle a,\Gamma\cup A(\bar x)\rangle\in P$ then $\langle (\exists, a),\Gamma\cup\exists xA(x)\rangle \in P$\item[(cut)] if $\langle a,\Gamma\cup A\rangle$ and $\langle b,\Gamma\cup\neg A\rangle$ are both in $P$ then $\langle (\text{cut},a,b),\Gamma\rangle\in P$ \end{itemize} \end{definition}
Every code is of the form $\langle R,\Gamma,A,a_1,\ldots,a_n\rangle$, where $\Gamma$ is the context and $A$ is the main formula (or the cut formula)  of the last rule $R$. We are assuming a primitive recursive G\oo del numbering of formulas etc. To avoid too heavy notation, we omit corner quotes throughout.  The primitive recursive functions $\End$ (for end sequent) and $\Rule$ (for last rule) are defined by  $\End(a)=(a)_1$ if $\Rule(a)=(a)_0=\cut$ and $(a)_1\cup(a)_2$ otherwise. 

\begin{definition}\label{rec codes}
	$\Crec$ is the smallest subset of $\N$ such that:
	\begin{itemize}
		\item[(axiom)]  for every axiom $\Gamma$, $\langle\ax, \Gamma\rangle \in \Crec$;
		\item[($\land$)] if $a,b\in \Crec$, $\End(a)=\Gamma,A$, $\End(b)=\Gamma,B$, then 
		$\langle \land,\Gamma, A\land B, a,b\rangle\in \Crec$;
		\item[($\lor$)] if $a\in \Crec$ with $\End(a)=\Gamma,A,B$ then $\langle\lor, \Gamma,A\lor B, a \rangle\in \Crec$;
		\item[($\omega$-rule)] if $\kl e$ is total,  $\kl e(n)\in \Crec$ and  $\End(\kl e(n))=\Gamma,A(\bar n)$ for every $n$, then \newline
		$\langle \omega, \Gamma,\forall xA(x),e\rangle\in \Crec$;
		\item[($\exists$)] if $a\in \Crec$ and $\End(a)=\Gamma, A(\bar n)$ then $\langle \exists, \Gamma,\exists xA(x),a\rangle \in \Crec$;
		\item[(cut)] if $a,b\in \Crec$, $\End(a)=\Gamma,C$ and $\End(b)=\Gamma,\neg C$, then $\langle \cut, \Gamma,C, a,b\rangle\in \Crec$.
	\end{itemize}
\end{definition}

Let $a \in \Crec$. Define the length $|a|\in\on$ of $a$ by
\begin{align*}
|\langle \ax,\Gamma\rangle|&=0,\\
|\langle R,\Gamma,A,a_1,a_2,\ldots,a_n\rangle|&=\sup\{|a_1|,|a_2|,\ldots,|a_n|\}+1, \\
|\langle\omega, \Gamma,\forall xA(x),e\rangle|&= \sup\{|\kl e(n)|+1\mid n\in\omega\}.
\end{align*}
For $a\in\Crec$, define the $\omega$-proof $\pi(a)$ by induction on the code $a$. For instance, if $a=\langle \omega,\Gamma,\forall xA(x),e\rangle$, then 
\[         \pi(a)(\langle\rangle)=\langle \omega,\Gamma,\forall xA(x)\rangle, \]
\[         \pi(a)(\langle n\rangle\conc\sigma)= \pi(\kl e(n))(\sigma). \] 
Note that $\pi(a)$ is an $\omega$-proof of $\End(a)$ in the sense of Definition \ref{omega proof}. Hence,  $\End(a)=\End(\pi(a)(\langle\rangle))$.  Also,   $|a|=|\pi(a)|$, where the length of the $\omega$-proof $\pi(a)$ is the length (aka height) of the well founded tree $T_{\pi(a)}$. \smallskip

It turns out that an $\omega$-proof $\pi$ is recursive if and only if there is recursive local code $a$ such that $\pi(a)=\pi$.  For more information on local and global codes,  see Section \ref{index vs code}. \smallskip

Let $\Crec^-$ be the cut free fragment of $\Crec$. Set $\thm(\Crec)=\{A\colon \End(a)=A \text{ for some }a\in \Crec\}$. Similar definitions apply to all proof notation systems considered in this paper.

\begin{theorem}[Shoenfield]\label{shoenfield}
For every true arithmetical sentence $A$ there is recursive local code $a\in\Crec^-$ such that $\pi(a)$ is a primitive recursive cut free $\omega$-proof of $A$. Therefore, $\thm(\Crec)=\thm(\Crec^-)$  coincides with the set of true arithmetical sentences.
\end{theorem}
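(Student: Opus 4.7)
The plan is to implement the standard proof-search strategy for classical first-order $\omega$-arithmetic and verify that the resulting object is simultaneously a primitive recursive $\omega$-proof and has a local code in $\Crec^-$. First I would define a primitive recursive one-step reduction: given a sequent $\Gamma$ (with a position counter attached to each existential formula), if $\Gamma$ contains a closed true literal, return $\langle \ax, \Gamma\rangle$; otherwise, select by a fixed fair schedule a compound formula of $\Gamma$ and return its Tait premise(s). For $\exists xB(x)$ I would use the usual \emph{retry} trick: the premise is $\Gamma, \exists xB(x), B(\bar n)$, where $n$ is read from the counter, and the counter in the premise is advanced. This one-step reduction is patently primitive recursive in the code of $\Gamma$.

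Next I would iterate the reduction to obtain the $\omega$-proof. Starting from the singleton sequent $\{A\}$ at the root, the sequent $\pi_A(\sigma)$ at node $\sigma\in\seq$ is computed by primitive recursion on the length of $\sigma$, so $\pi_A$ is primitive recursive. In parallel I would specify a function $c(\sigma)$ returning the local code at $\sigma$: at axiom, $\land$, $\lor$, and $\exists$ nodes, $c(\sigma)$ is assembled directly from $c(\sigma\conc 0)$ (and $c(\sigma\conc 1)$ when $\land$ applies) together with $\pi_A(\sigma)$; at an $\omega$-node it is $\langle \omega, \Gamma, \forall xB(x), e_\sigma\rangle$, where $e_\sigma$ is produced primitive recursively in $\sigma$ by the s-m-n theorem so that $\kl{e_\sigma}(n)=c(\sigma\conc n)$. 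The recursion theorem then yields a partial recursive $c$ of the specified form; when the search tree is well founded, $c$ is total, the root code $a:=c(\langle\rangle)$ lies in $\Crec$, and $\pi(a)=\pi_A$ is a primitive recursive $\omega$-proof with $\End(a)=A$; since the reduction never uses cut, $a\in\Crec^-$.

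What remains is to show that the search tree is well founded whenever $A$ is true. Suppose toward a contradiction that there is an infinite branch, and let $H$ be the set of formulas appearing in some sequent along it. By fairness of the schedule and the retry mechanism, $H$ is a Hintikka set: $B\lor C\in H$ forces $B,C\in H$; $B\land C\in H$ forces one of $B,C$ into $H$ (the child chosen by the branch); $\forall xB(x)\in H$ forces some $B(\bar n)\in H$; $\exists xB(x)\in H$ forces every $B(\bar n)\in H$ (the essential role of retry); and no literal in $H$ is true, for otherwise the axiom clause would have closed the branch. Induction on formula complexity then shows every sentence in $H$ is false under the standard interpretation, but $A\in H$, contradicting the hypothesis. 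The principal technical point is calibrating the fair schedule and retry counters so that every compound formula occurring along the branch is eventually decomposed and every value of every retry counter is eventually reached; once this fairness is arranged, the Hintikka-limit argument closes the proof.
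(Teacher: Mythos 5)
Your proposal is correct and follows essentially the same route as the paper: the Sch\"{u}tte proof-search tree (with a fair schedule and the retry clause for $\exists$), a local code built by the recursion theorem with s-m-n at $\omega$-nodes, and well-foundedness of the search tree for a true sentence via the falsifying-branch (Hintikka set) argument that the paper delegates to Sch\"{u}tte's deduction chains. The only cosmetic difference is that you index the code function by tree nodes $\sigma$ whereas the paper indexes it by ordered sequents, which amounts to the same construction.
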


\begin{proof}[Proof sketch]
We implement the proof-search construction based on  Sch\"{u}tte's deduction chains \cite{S77}. The very same idea applies to different Gentzen style proof systems. In our context, given a sequent $\Gamma$, one can build primitive recursively in the parameter $\Gamma$ a function $\pi\colon\N\to\N$ such that $\Gamma$ is true if and only if $\pi$ is an $\omega$-proof of $\Gamma$ in the sense of Definition \ref{omega proof}. We will refer to $\pi$ as to the  Sch\"{u}tte tree of $\Gamma$ and write $\pi(\Gamma)$. Indeed, there is a primitive recursive function $\pi(x,y)$ such that  $\pi(\Gamma)=\lambda x.\pi(\Gamma,x)$ for every sequent $\Gamma$.  Moreover,  $\Gamma$ is true if and only if  $\pi(\Gamma)$ is a cut free $\omega$-proof of $\Gamma$  if and only if $T_{\pi(\Gamma)}$ is well founded. In fact, $\pi(\Gamma)$ always defines a locally correct, although not necessarily well founded, $\omega$-proof. 

We want to define a partial recursive function $\psi$ such that $\psi(\Gamma)$ is defined  with  $\psi(\Gamma)\in \Crec$ and $\End(\psi(\Gamma))=\Gamma$,  whenever  $\Gamma$ is a true sequent. By construction, $\psi(\Gamma)$ will code a primitive recursive cut free $\omega$-proof of $\Gamma$, namely the Sch\"{u}tte tree of $\Gamma$.

For this construction one considers ordered sequents. An ordered sequent has the form  $\Gamma$ or $\Gamma,A,\Delta$ where $\Gamma$ consists of literals and $A$ is not a literal.\footnote{Strictly speaking, the Sch\"{u}tte tree consists of ordered sequents. Clearly, one can primitive recursively in the function $\psi(\Gamma)$ forget the order and hence obtain a primitive recursive tree of (unordered) sequents.}

By the recursion theorem, there is a partial recursive function  $\psi$ with index $e$  satisfying the following recursive equations.
\begin{align*}
\kl e(\Gamma)&\simeq \langle \ax,\Gamma\rangle   \ \ \ \Gamma \text{ axiom}\\
\kl e(\Gamma,A\land B,\Delta)&\simeq\langle\land, \Gamma\cup \Delta,A\land B,\kl e(\Gamma,A,\Delta), \kl e(\Gamma,B,\Delta) \rangle\\
\kl e(\Gamma,A\lor B,\Delta)&\simeq \langle\lor, \Gamma\cup\Delta, A\lor B, \kl e(\Gamma,A,B,\Delta) \rangle\\ 
\kl e(\Gamma,\forall x A(x),\Delta)&\simeq\langle\omega,\Gamma\cup\Delta,\forall xA(x), \Lambda n. \kl e(\Gamma,A(\bar n),\Delta)\rangle \\
\kl e(\Gamma,\exists xA(x),\Delta)&\simeq \langle \exists, \Gamma\cup\Delta,\exists xA(x),\kl e(\Gamma,A(\bar n),\Delta,\exists xA(x))\rangle 
\end{align*}
In the  clause for $\exists$, $n$ is least such that $A(\bar n)$ does not occur in the sequent $\Gamma$. In the clause for the $\omega$-rule, $\Lambda n.\kl e(\Gamma,A(\bar n),\Delta)$ is an index of the partial recursive function $\lambda n.\kl e(\Gamma,A(\bar n),\Delta)$. Such index can be obtained in a primitive recursive way from $e$ and $\Gamma,\forall xA(x),\Delta$.

Suppose $\Gamma$ is true. The Sch\"{u}tte tree of $\Gamma$ yields a primitive recursive cut free $\omega$-proof $\pi(\Gamma)$ of $\Gamma$. In particular, it is well founded. By induction on the length of the tree,  show that $\psi(\Gamma)$ is defined and $\psi(\Gamma)\in\Crec^-$ is a code of $\pi(\Gamma)$.
\end{proof}

Shoenfield  gives the bound $\omega^\omega$ on the length of a recursive $\omega$-proof of a true arithmetical statement.\footnote{Cf.\ Feferman \cite[Theorem 5.13]{F62}. Every true sentence is provable in some $\pa_d$ with $d\in\klo$ and $|d|<\omega^{\omega^\omega}$.  Fenstad \cite{Fen68} improves Feferman's bound to $\omega^\omega$.}  It is not difficult to see that the length of the Sch\"{u}tte tree of a sequent containing a true  sentence of rank (degree) $n$ is $<\omega\cdot (n+1)$ (cf.\ Franz\'{e}n \cite{Fra04}). Therefore one obtains the bound $\omega^2$ for recursive Tait style $\omega$-proofs of true sentences. Note that the precise bound may depend  on the particular choice of the proof system.  Of course,  on account of Tarski's theorem, one cannot equip such recursive $\omega$-proofs with recursive ordinal bounds (that is, one cannot obtain a recursive $\omega$-proof where every sequent  appearing in it comes with an ordinal notation  for an ordinal $<\alpha$ bounding the length of the corresponding $\omega$-subproof, where $\alpha$ is a fixed recursive ordinal).

\subsection{$\Pi^1_1$ completeness of $\Crec$}

We show that $\klo$ is many-one reducible to $\Crec$ and thus $\Crec$ is $\Pi^1_1$ complete. The same reduction will witness that $\Crec^-$ is also $\Pi^1_1$ complete.
%We provide two proofs. In the first proof we illustrate the main idea by allowing improper inference rules. We next show that one can obtain the same lengthening effect by using ordinary proofs.  Note again that  every true arithmetical statement admits a very short ($<\omega^2$) proof, and so one cannot expect to 

\begin{theorem}\label{thm rec complete}
There exists a primitive recursive function $f$ such that $a\in \klo$ if and only if  $f(a)\in\Crec$. Therefore $\Crec$ is $\Pi^1_1$ complete. Indeed, $f(a)\in\Crec^-$ for every $a\in\klo$ and so $f$ also witnesses a many-one reduction of $\klo$ to $\Crec^-$.
\end{theorem}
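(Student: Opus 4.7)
The plan is to define $f$ via a single application of the primitive recursion theorem, designing it so that the $\omega$-proof tree coded by $f(a)$ mirrors the inductive build-up of $a\in\klo$. To avoid juggling end sequents I would fix once and for all the sequent $S=\{0=0,\,(0=0)\lor(0=0),\,\forall x(0=0)\}$ and arrange $\End(f(a))=S$ uniformly. The point of $S$ is that it is an axiom (because $0=0$ is a true closed literal), it is closed under the $\lor$-rule with context $S$ and main formula $(0=0)\lor(0=0)$, and it is closed under the $\omega$-rule with context $S$, constant family $A(\bar n)=0=0$, and main formula $\forall x(0=0)$.

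Concretely, the definition would be
\begin{align*}
f(1)&=\langle\ax,S\rangle,\\
f(2^b)&=\langle\lor,S,(0=0)\lor(0=0),f(b)\rangle,\\
f(3\cdot 5^e)&=\langle\omega,S,\forall x(0=0),c_e\rangle,\\
f(a)&=0\text{ otherwise},
\end{align*}
where $c_e$ is the s-m-n index of the partial recursive function that on input $n$ first searches for a witness of the r.e.\ predicate $\kl e(n)<'_\klo \kl e(n+1)$ and, if found, returns $f(\kl e(n))$. Since $c_e$ is obtained from $e$ by s-m-n without ever evaluating $\kl e$, the primitive recursion theorem delivers such an $f$ as a primitive recursive function.

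For the forward implication $a\in\klo\Rightarrow f(a)\in\Crec^-$ I would induct on the inductive definition of $\klo$. The base and $2^b$ cases are routine from IH and the closure properties of $S$. For $a=3\cdot 5^e\in\klo$, the hypotheses $\kl e$ total and $\kl e(n)<'_\klo \kl e(n+1)$ for all $n$ make every witness search halt; each $\kl e(n)$ then lies in $\klo$, so IH gives $f(\kl e(n))\in\Crec^-$ with end sequent $S$, and the $\omega$-rule clause of Definition~\ref{rec codes} applies to $c_e$. Since no step of $f$ uses cut, $f(a)\in\Crec^-$. The converse $f(a)\in\Crec\Rightarrow a\in\klo$ goes by well-founded induction on the $\Crec$-rank of $f(a)$: the shape of $f$ forces $a\in\{1,2^b,3\cdot 5^e\}$; the first two cases are direct (with IH on the single premise $f(b)$), and in the third the totality of $\kl{c_e}$ demanded by the $\omega$-rule clause forces both $\kl e$ to be total and $\kl e(n)<'_\klo \kl e(n+1)$ to hold for every $n$, so $3\cdot 5^e\in\klo$.

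The delicate step is honouring the strictness requirement $\kl e(n)<'_\klo \kl e(n+1)$ baked into the $\klo$-clause for limit notations: without folding the witness search into $c_e$, the naive choice $\kl{c_e}(n)=f(\kl e(n))$ would place $f(3\cdot 5^e)$ in $\Crec$ for any total $\kl e$ with each $\kl e(n)\in\klo$, which is strictly weaker than $3\cdot 5^e\in\klo$ (e.g., $\kl e$ constantly $1$). Interleaving the r.e.\ $<'_\klo$-witness search with the recursive call inside $c_e$ is exactly what aligns the $\Crec$-side condition with the $\klo$-side condition and yields a genuine many-one reduction from $\klo$ to $\Crec^-$.
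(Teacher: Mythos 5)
Your reduction is correct, and it takes a genuinely different route from the paper's. The paper also builds artificially long cut free proofs by the recursion theorem and also folds the $<'_\klo$ witness search into the index attached to the $\omega$-rule at limit notations (precisely the delicate point you identify), but it keeps the end sequent equal to the single formula $\forall x(x=x)$ and uses the $\omega$-rule at \emph{both} successor and limit stages, with premises $\forall xA(x),A(\bar n)$. That forces it to introduce a weakening function $\weak(a,\Delta)$ and, crucially, to redefine $\weak$ so that $\weak(a,\Delta)\in\Crec$ implies $a\in\Crec$ and $|\weak(a,\Delta)|=|a|$, which is the fiddliest part of its argument and is needed for the converse induction. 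Your choice of the self-closed sequent $S$ --- an axiom that absorbs its own $\lor$-conclusion and its own $\omega$-conclusion with constant instances $0=0$ --- eliminates the weakening machinery entirely, since every premise already has end sequent $S$; this is a real simplification. What the paper's version buys in exchange is the stronger remark it draws afterwards, namely that the fragment consisting of axioms and the $\omega$-rule \emph{alone} is already $\Pi^1_1$ complete; your successor step uses $\lor$, so your $f$ does not witness that sharper statement (though it fully proves the theorem as stated, including $f(a)\in\Crec^-$ for $a\in\klo$). One point to tighten: your clause $f(2^b)=\langle\lor,S,(0=0)\lor(0=0),f(b)\rangle$ embeds the \emph{value} $f(b)$, not merely an index, so the step function handed to the primitive recursion theorem is not literally primitive recursive as required by the theorem's statement; you should either phrase the definition as a course-of-values recursion in a parameter $d$ (tying $d$ to an index of the resulting function by the ordinary recursion theorem and s-m-n, and noting that course-of-values recursion is primitive recursive), or argue as the paper does that the fixed point obtained from the ordinary recursion theorem is primitive recursive by direct inspection. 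This is a repairable technicality, not a gap in the reduction itself.
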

\begin{proof}
The idea is to manufacture artificially long $\omega$-proofs of the sentence $\forall x(x=x)$. Successor and limit stages are treated similarly. Essentially, $f(a)$ for $a\in\klo$ will be the code of an $\omega$-proof whose last inference is 
	\[ \rul{\{\forall xA(x),A(\bar n)\colon n\in\omega\}}{\forall xA(x)}{\orule}, \]
where $A(x)$ is $x=x$.
In both cases we make use of a primitive recursive function $\weak$ (for weakening) such that for every $a\in\Crec$ and for every sequent $\Delta$, $\weak(a,\Delta)\in\Crec$ with $\End(\weak(a,\Delta))=\End(a)\cup\Delta$. As usual, this function is easily defined by  the recursion theorem and seen to be primitive recursive by direct inspection. 
	\begin{align*}
	\weak(\langle \ax, \Gamma\rangle,\Delta)&= \langle \ax,\Gamma\cup\Delta\rangle   \\
	\weak(\langle R, \Gamma,B,a_1,\ldots,a_n\rangle,\Delta)&=\langle R, \Gamma\cup\Delta,B,\weak(a_1,\Delta),\ldots,\weak(a_n,\Delta)\rangle \\
	\weak(\langle \omega, \Gamma,\forall xA,e\rangle,\Delta)&= \langle \omega, \Gamma\cup\Delta,\forall xA,\Lambda n.\weak(\kl e(n),\Delta)\rangle
	\end{align*}
For the proof at hand, though, we need  more control on $\weak$. Namely, we want $\weak$ so that $\weak(a,\Delta)\in\Crec$ implies $a\in\Crec$. For this purpose, define $\weak$ such that 
\begin{align*}
\weak(\langle \ax, \Gamma\rangle,\Delta)&= \langle \ax,\Gamma\cup\Delta\rangle \qquad \text{ if $\Gamma$ is an axiom},\\
\weak(\langle \land,\Gamma,A\land B,a,b\rangle,\Delta)&= \langle \land,\Gamma\cup \Delta,A\land B,\weak(a,\Delta),\weak(b,\Delta)\rangle,     \\ 
& \qquad \text{ if } \End(a)=\Gamma,A \text{ and } \End(b)=\Gamma,B, \\
\weak(\langle\lor,\Gamma,A\lor B,a\rangle,\Delta) &= \langle \lor,\Gamma\cup\Delta,A\lor B,\weak(a,\Delta)\rangle,   \\
& \qquad \text{ if } \End(a)=\Gamma,A,B, \\
\weak(\langle \omega,\Gamma,\forall xA(x),e\rangle,\Delta) &= \langle\omega,\Gamma\cup\Delta,\forall xA(x),f(e)\rangle, \\
\weak(\langle \exists,\Gamma,\exists xA(x),a\rangle,\Delta) &= \langle \exists,\Gamma\cup\Delta,\weak(a,\Delta)\rangle, \\
& \qquad \text{ if } \End(a)=\Gamma,A(\bar n) \text{ for some } n. 
\end{align*}
Here, the function $f$ in the clause for the $\omega$-rule  is defined by
\[   \kl{f(e)}(n) \simeq \begin{cases}
 \weak(\kl e(n),\Delta) & \text{ if }  \End(\kl e(n))=\Gamma,A(\bar n);\\
 \text{undefined}   & \text{ otherwise.}
\end{cases} \]

The recursion theorem gives us a primitive recursive function $f$ such that	
	\begin{align*}
	f(1)&= d, \\
	f(2^a)&=\langle\omega, \forall xA(x), \forall xA(x),g(a)\rangle, \\
	f(3\cdot 5^e)&= \langle \omega, \forall xA(x),\forall xA(x),h(e)\rangle, 
	\end{align*}
where $d\in\Crec$ is a given proof of $\forall xA(x)$, $g(a)$ is an index of 
$ \lambda n. \weak(f(a), A(\bar n))$,
and $h(e)$ is an index of  $\lambda n.\weak(\vp(n),A(\bar n))$, where  the  partial recursive function $\vp$ is defined as
	\begin{align*}
\vp(0)&\simeq f(\kl e(0)), \\
\vp(n+1)&\simeq \begin{cases}
f(\kl e(n+1))  & \text{ if } \kl e(n)<'_\klo \kl e(n+1); \\
\text{undefined } & \text{ otherwise.}  \end{cases}
\end{align*} 
Recall that the relation $<'_\klo$ is recursively enumerable  and coincides with $<_\klo$ on $\klo$. By direct inspection, $f$ is primitive recursive.

By induction on $a\in\klo$, it is easy to show that $f(a)\in\Crec$ is a proof of $\forall x(x=x)$ of length $>|a|$. 	

For the other direction, note that if we simply define $h(e)$ as \[\Lambda n. \weak(f(\kl e(n)),A(\bar n)),\] then  $f(3\cdot 5^e)\in \Crec$ need not imply $3\cdot 5^e\in\klo$, even if $\kl e(n)\in\klo$ for every $n$. In fact, we also need $\kl e(n)<_\klo \kl e(n+1)$ for every $n$. This is why we use $\vp$ instead of $\lambda n.f(\kl e(n))$.

The proof proceeds by induction on the length of a code.  We only consider the limit case. Suppose $f(3\cdot 5^e)\in \Crec$. Then $h(e)$ is the index of a total recursive function and $\kl{h(e)}(n)\in\Crec$ for every $n$. It follows that $\vp$ is a total recursive function. This implies that $\kl e$ is  total  and  $\kl e(n)<'_\klo \kl e(n+1)$ for every $n$. It remains to show that  $\kl{e}(n)\in\klo$ for every $n$.  By definition, $\kl{h(e)}(n)\simeq \weak(f(\kl e(n)),A(\bar n))$.  By the properties of $\weak$, we thus have $f(\kl e(n))\in \Crec$. Now, $|f(\kl e(n))|<|f(3\cdot 5^e)|$. Note that $\weak$ preserves the length of a code. By  induction, it follows that $\kl e(n)\in\klo$, as desired.	
\end{proof}

\vspace{3pt}

The proof shows that a very tiny fragment of $\omega$-arithmetic, consisting of axioms and $\omega$-rule only, is already $\Pi^1_1$ complete.  An alternative reduction of $\klo$ to $\Crec$, although using cuts in a systematic way, is based on the following idea. Fix a true sentence $A$ and primitive recursively transform every $a\in\klo$ into a code $f(a)\in\Crec$ of an $\omega$-proof of $A$ ending as follows.
\[\begin{prooftree}
 \[  \[ \leadsto A \ \ \ A \ \ \cdots\ \  A\ \  \cdots \]  \justifies
A,\forall xA  \using{\orule} \]  \qquad \[ \[ g \leadsto A,\neg A \] \justifies A,\exists x \neg A  \using{\exists}\]  \justifies A \using{\cut}
\end{prooftree} \]
For this construction use a given code $g\in\Crec^-$ of an $\omega$-proof of $A,\neg A$. Note that for every sentence $B$ we can primitive recursively define a code $g(B)\in\Crec^-$ of an $\omega$-proof of $B,\neg B$. For instance, $g(\forall xB(x))$ encodes the following 
$\omega$-proof.
\[ \begin{prooftree}
\cdots \ \   \[ \[ g(B(\bar n)) \leadsto B(\bar n),\neg B(\bar n)\]  \justifies B(\bar n),\exists x\neg B(x) \using{\exists}\] \ \ \cdots 
\justifies \forall xB(x),\exists x\neg B(x) \using{\orule}
\end{prooftree} \]

\section{The primitive recursive $\omega$-rule}

Define $\Cpr$ as $\Crec$ but now use indices for primitive recursive functions. That is, replace the inductive clause for the $\omega$-rule with
\begin{itemize}
\item[($\omega$-rule)] if $\kl e$ is primitive recursive,  $\kl e(n)\in \Cpr$ and  $\End(\kl e(n))=\Gamma,A(\bar n)$ for every $n$, then 
$\langle \omega, \Gamma,\forall xA(x),e\rangle\in \Cpr$.
\end{itemize}
Of course, $\Cpr\subseteq \Crec$. 

\begin{remark} One can use a standard numbering of all primitive recursive functions. The two approaches are slightly different. The results of this paper apply either way. We will occasionally pinpoint the differences. 
\end{remark}

%The set $\{e\colon \kl e \text{ is primitive recursive}\}$ is not recursive but only arithmetical ($\Sigma_3$). For the time being this is not relevant.

%However, as long as we consider arbitrary primitive recursive proofs this is not relevant. It does matter if we want to obtain a $\Pi_1$ definition of codes for primitive recursive proofs that carry an \emph{ordinal analysis} of length and rank.  In this case, one can choose  indices from a primitive recursive subset $A$ of $\N$  such that $e\in A$ implies $\kl e$ is primitive recursive and for every primitive recursive function $f$ there is $e\in A$ such that $f\simeq \kl e$.  Alternatively, one could consider, as in Kleene, a numbering of all primitive recursive functions. \smallskip

We make extensive use of the primitive recursive function $\weak$ performing weakening as defined in the proof of Theorem \ref{thm rec complete}. We list the main  properties without proof. These will be used without further notice.

\begin{lemma}
There is a primitive recursive function $\weak$ such that, for every $a$ and for every sequent $\Delta$,  
$a\in \Crec$ if and only if  $\weak(a,\Delta)\in\Crec$. The same property holds with respect to $\Crec^-$, $\Cpr$, and $\Cpr^-$. Also, if $a\in \Crec$ then  $\End(a)\cup \Delta=\End(\weak(a,\Delta))$ and $|a|=|\weak(a,\Delta)|$.
\end{lemma}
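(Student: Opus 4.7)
My plan is to take the function $\weak$ already defined in the proof of Theorem \ref{thm rec complete} and verify its three assertions. First, primitive recursiveness: the defining clauses perform a case split on the outer rule symbol of $a$, and in every clause $\weak$ is called only on proper subcodes of $a$, so a course-of-values primitive recursion suffices. The $\omega$-rule case produces the index $f(e)$ via the s-m-n theorem applied primitively recursively to $e$ and $\Delta$. For arguments not fitting any clause, I set $\weak(a,\Delta)$ to a default value (say $0$) that does not match the shape of any legitimate code. The forward direction, together with $\End(\weak(a,\Delta))=\End(a)\cup\Delta$ and $|a|=|\weak(a,\Delta)|$, is then a straightforward induction on the build-up of $a\in\Crec$: the axiom case uses that $\Gamma\cup\Delta$ is an axiom whenever $\Gamma$ is; the $\land,\lor,\exists$ cases read off directly from the clauses and the IH; in the $\omega$-case, the IH gives $\kl e(n)\in\Crec$ with $\End(\kl e(n))=\Gamma,A(\bar n)$ for every $n$, so $\kl{f(e)}(n)=\weak(\kl e(n),\Delta)\in\Crec$ with end-sequent $\Gamma\cup\Delta,A(\bar n)$, and $\kl{f(e)}$ is total. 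The cut-free variants $\Crec^-$ and $\Cpr^-$ inherit the property since $\weak$ introduces no cut inferences, and for $\Cpr$ the s-m-n construction exhibits $\kl{f(e)}$ as a composition of primitive recursive functions whenever $\kl e$ is primitive recursive.

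For the backward direction, $\weak(a,\Delta)\in\Crec\Rightarrow a\in\Crec$, I would argue by induction on $|\weak(a,\Delta)|$, exploiting the fact that each clause of $\weak$ is guarded by a condition that can only be met when $a$ itself has the matching shape and its subcodes satisfy the expected end-sequent conditions. For instance, the clause producing $\langle\ax,\Gamma\cup\Delta\rangle$ fires only when $a=\langle\ax,\Gamma\rangle$ with $\Gamma$ already an axiom; the $\land$ clause only fires when the two subcodes of $a$ end in $\Gamma,A$ and $\Gamma,B$; and so on. So from $\weak(a,\Delta)\in\Crec$ one reads off the shape of $a$ and, via the IH applied to the $\weak$-subcodes (which appear literally as subcodes of $\weak(a,\Delta)$), the membership of the subcodes of $a$ in $\Crec$. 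In the $\omega$-case, the totality of $\kl e$ and the correct end-sequents of its values are forced by the defining equations of $f(e)$ (since $\kl{f(e)}(n)$ is defined only when $\End(\kl e(n))=\Gamma,A(\bar n)$), and $\kl{f(e)}(n)=\weak(\kl e(n),\Delta)\in\Crec$ then delivers $\kl e(n)\in\Crec$ by the IH.

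The only nontrivial point is the $\Cpr$ backward direction in the $\omega$-case: from $\kl{f(e)}$ primitive recursive we must conclude that $\kl e$ is primitive recursive. For this I would show that $\weak(\cdot,\Delta)$ is primitively recursively invertible on the range of codes-in-$\Crec$, defining an inverse $\weak^{-1}(b,\Delta)$ by a structural recursion mirroring $\weak$: strip $\Delta$ from the outermost context, recursively invert each subcode, and for the $\omega$-rule rebuild the inverse index via s-m-n. Then $\kl e(n)$ is extensionally equal to $\weak^{-1}(\kl{f(e)}(n),\Delta)$, which is primitive recursive in $n$. Under the alternative formalization using a standard numbering of all primitive recursive functions (cf.\ the Remark), every $\kl e$ is primitive recursive by construction of the numbering and this subtlety disappears.
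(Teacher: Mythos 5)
Most of your proposal is sound and is exactly what the paper's (unproved) lemma is designed for: the paper only supplies the guarded clauses of $\weak$ from the proof of Theorem \ref{thm rec complete} and states the properties without proof, and your forward induction, the end-sequent and length computations, the preservation of cut-freeness, and the backward induction for $\Crec$ and $\Crec^-$ via the guards all go through. (One presentational quibble: the program for $f(e)$ in the $\omega$-clause calls $\weak$ itself, so you need the recursion theorem to obtain an index for $\weak$ before $f(e)$ makes sense; plain course-of-values recursion does not supply that self-reference. This is easily repaired and is how the paper sets it up.)

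The genuine gap is exactly where you flag it, and your repair fails: $\weak(\cdot,\Delta)$ is \emph{not} injective, so the inverse $\weak^{-1}(\cdot,\Delta)$ you want does not exist. At each node the recorded context $\Gamma$ is replaced by $\Gamma\cup\Delta$, from which $\Gamma$ is recoverable only modulo $\Delta$. Concretely, let $D$ be $(0=0)\lor(0=0)$, let $\Delta=\{D\}$, and set
\[ b^0_n=\langle\lor,\{\bar n=\bar n\},D,\langle\ax,\{\bar n=\bar n,0=0\}\rangle\rangle, \qquad b^1_n=\langle\lor,\{\bar n=\bar n,D\},D,\langle\ax,\{\bar n=\bar n,D,0=0\}\rangle\rangle. \]
Both lie in $\Cpr^-$ with end sequent $\{\bar n=\bar n,D\}$, they are distinct numbers, yet $\weak(b^0_n,\Delta)=\weak(b^1_n,\Delta)$. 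Taking $h$ total recursive, $0$--$1$ valued and not primitive recursive, the function $\kl e(n)=b^{h(n)}_n$ is not primitive recursive although $\lambda n.\weak(\kl e(n),\Delta)$ is; so $\kl e(n)$ is not even a function of $\kl{f(e)}(n)$, and the step from ``$\kl{f(e)}$ primitive recursive'' to ``$\kl e$ primitive recursive'' cannot be carried out by inverting $\weak$. Worse, with $a=\langle\omega,\{D\},\forall x(x=x),e\rangle$ this gives $a\notin\Cpr$ while $\weak(a,\Delta)\in\Cpr$ under the extensional reading of ``$\kl e$ is primitive recursive'', so the difficulty is not an artifact of your method: the $\Cpr$ equivalence needs either the intensional reading via a standard numbering $[e]$ of the primitive recursive functions (your closing remark, under which the problem indeed disappears) or an argument restricted to the particular codes to which the lemma is applied. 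As written, the inverse-function step is not a proof.
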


\subsection{Indices versus codes}\label{index vs code}
The set of global codes  of  (primitive) recursive $\omega$-proofs is defined according to Definition \ref{omega proof}.
\begin{definition}\label{indices}
	The set $\Irec$ ($\Ipr$) consists of all numbers  $a$ such that  $a$ is the index of a (primitive) recursive $\omega$-proof.
\end{definition}
Therefore, by definition, an $\omega$-proof $\pi$ is  (primitive) recursive if and only if there is a (primitive) recursive global code $a$ such that $\pi=\kl a$.  For $a\in\Irec$, let $\pi(a)=\kl a$ and $|a|=|\pi(a)|$. \smallskip
%Cf.\ $\klo$ and the fact that one can convert an index  $a$ of a recursive well-founded tree into a code $f(a)$ in $\klo$ via a primitive recursive function $f$. However, if $a$ is an index then $|a|<|f(a)|$ and $<$ cannot be improved (see Sacks).

The following effective transformations between local codes and indices (global codes)   can be found in Sundholm \cite{S83}. The transformations involving primitive recursive codes   are obtained by allowing vacuous rules. These are  the repetition rule (cf.\ \cite{B91,M78})
\[  \rul{\Gamma}{\Gamma}{\rep}, \] and the vacuous $\omega$-rule  \[ \rul{\Gamma\ \ \Gamma \ \ \cdots\ \  \Gamma\ \  \cdots }{\Gamma}{\orep}. \] 
Define $\Crec^{\rep}$ and $\Crec^{\orep}$ by including the superscripted rule. Similar definitions apply to all other proof notation systems. 

All of the following transformations preserve the end sequent of an $\omega$-proof, although they do not necessarily preserve the $\omega$-proof itself.
\begin{itemize}
	\item[(r1)] There is a primitive recursive function $f$ such that if $a\in \Crec$, then  $f(a)\in\Irec$ is an index of $\pi(a)$, that is, $\pi(a)=\kl{f(a)}$. 
	\item[(r2)] There is a partial recursive function $\vp$ such that if  $a\in\Irec$, then   $\vp(a)$ is defined and $\vp(a)\in \Crec$ is a code of $\pi(a)$, that is, $\kl a=\pi(\vp(a))$.
	\item[(idx)] There is a primitive recursive function $f$ such that if  $a\in \Irec$, then $f(a)\in\Ipr^{\rep}\setminus\Ipr$ is an index of an $\omega$-proof $\pi(f(a))$ with same end sequent as $\pi(a)$. Moreover, $|a|\leq |f(a)|$ and, save for the repetition rule, every rule in $\pi(f(a))$ appears in $\pi(a)$.
\end{itemize}
\begin{itemize}
	\item[(pr1)]  There is a primitive recursive function $f$ such that if $a\in\Cpr$, then $f(a)\in\Ipr^{\rep}\setminus\Ipr$ is an index of an $\omega$-proof $\pi(f(a))$ with same end sequent as $\pi(a)$.  Moreover, $|a|\leq |f(a)|$ and, save for the repetition rule, every rule in $\pi(f(a))$ appears in $\pi(a)$.
	\item[(pr2)] There is  a partial recursive function $\vp$ such that if  $a\in\Ipr$, then $\vp(a)$ is defined and $\vp(a)\in\Cpr^{\orep}$ is a code of an $\omega$-proof $\pi(\vp(a))$ with same end sequent as $\pi(a)$.  Moreover, $|a|\leq |\vp(a)|$ and, save for the vacuous $\omega$-rule, every rule in $\pi(\vp(a))$ appears in $\pi(a)$.   
\end{itemize}

The vacuous rules in (idx), (pr1), and (pr2) can be replaced by cuts. For instance,
\begin{itemize}
\item[(idx)]  There is a primitive recursive function $f$ such that if  $a\in \Irec$, then $f(a)\in \Ipr\setminus \Ipr^-$ is an index of an $\omega$-proof $\pi(f(a))$ with same end sequent as $\pi(a)$. Moreover, $|a|\leq |f(a)|$ and, save for the cut rule, every rule in $\pi(f(a))$ appears in $\pi(a)$.	
\end{itemize}

Sundholm \cite{S83} works with a standard numbering $\{[e]\colon e\in\omega\}$ of primitive recursive functions and corresponding definitions of  $\Cpr$ and $\Ipr$. We do not. However, all relations above hold \emph{mutatis mutandis}. By using indices $[e]$'s, one can obtain a total recursive function in (pr2) instead of just a partial recursive one. \smallskip

Let us  give a proof of item (pr2) (the variant with cuts). 
\begin{theorem}\label{pr index to pr code}
There is a partial recursive function $\vp$ such that  $a\in \Ipr$ implies $\vp(a)$ is defined, $\vp(a)\in\Cpr$, and $\End(\kl a(\langle\rangle))=\End(\vp(a))$.
\end{theorem}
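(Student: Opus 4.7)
My plan is to proceed in two steps: first obtain a code in $\Cpr^{\orep}$ using item (pr2) of Section \ref{index vs code}, and then convert to a code in $\Cpr$ by eliminating vacuous $\omega$-rules in favour of cuts.

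For the first step I would invoke (pr2) from Sundholm \cite{S83}, which yields a partial recursive $\vp_0$ with $\vp_0(a)\in\Cpr^{\orep}$ and $\End(\kl a(\langle\rangle))=\End(\vp_0(a))$ for every $a\in\Ipr$. This is built by the recursion theorem, traversing $\kl a$ node by node and emitting local constructors; at an $\omega$-rule node the index is supplied via s-m-n, while vacuous $\omega$-rules inserted at each non-$\omega$ step uniformize the recursion, so that (by the primitive recursion theorem) the resulting $\omega$-indices reference primitive recursive functions whenever $\kl a$ does.

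For the second step define, by the recursion theorem, a partial recursive function $\mu$ that traverses a $\Cpr^{\orep}$-code and replaces every vacuous application
\[ \rul{\Gamma\ \cdots\ \Gamma}{\Gamma}{\orep} \]
with the cut
\[ \rul{\Gamma,0=0\quad \Gamma,0\neq 0}{\Gamma}{\cut}, \]
where the left premise is the axiom $\langle\ax,\Gamma\cup\{0=0\}\rangle$ and the right premise is obtained by applying $\weak$ to (any) one of the original premises of the vacuous rule, all of which prove $\Gamma$. Other constructors are translated congruentially: at a genuine $\omega$-rule with index $e$, the new index is the s-m-n index of $\lambda n.\mu(\kl e(n))$. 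Set $\vp(a)=\mu(\vp_0(a))$; by construction $\vp(a)\in\Cpr$ and its end sequent coincides with $\End(\kl a(\langle\rangle))$.

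The main obstacle is preserving primitive recursiveness of the $\omega$-rule indices throughout. In the first step this is precisely what (pr2) addresses via the vacuous-rule uniformization. In the second step, one must check that whenever a genuine $\omega$-rule index $e$ of $\vp_0(a)$ is transformed, the composite $\lambda n.\mu(\kl e(n))$ remains primitive recursive; this follows because $\mu$ can be arranged as a primitive recursive transformation on codes (via the primitive recursion theorem) and $\kl e$ is primitive recursive by hypothesis on $\vp_0(a)\in\Cpr^{\orep}$. The critical feature of the cut replacement is that the right premise needs only a single original premise of the vacuous rule, identical for all $n$, so no unbounded recursion through the original $\omega$-index is required.
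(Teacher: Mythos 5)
Your step 2 has a genuine gap, and it is exactly the point the whole theorem turns on. To replace a vacuous application $\langle\orep,\Gamma,d\rangle$ by the cut you describe, the right premise $\weak(\mu(\kl d(0)),0\neq 0)$ must appear as an actual numerical component of the new tuple, so $\mu$ must \emph{evaluate} $\kl d(0)$ and then recursively transform the result. The map $d\mapsto\kl d(0)$ is an application of the universal function and is not primitive recursive (nor is the evaluation map for a standard numbering $[e]$ of the primitive recursive functions), and $\mu$ interleaves such evaluations along a well-founded tree of arbitrary recursive ordinal height. So $\mu$ is partial recursive but cannot be primitive recursive, and consequently the index $\Lambda n.\mu(\kl e(n))$ you attach at a genuine $\omega$-node need not denote a primitive recursive function even though $\kl e$ is primitive recursive. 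Your output therefore lands in $\Crec$, not in $\Cpr$, and the appeal to the primitive recursion theorem does not help: the obstruction is not the self-reference in defining $\mu$ but the need to apply the universal function to indices stored inside the code. The same obstruction is why the ``critical feature'' you cite (needing only one premise of the vacuous rule) does not save you --- one evaluation per vacuous node, iterated through the tree, is already too much.

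The paper's proof avoids this by never post-processing a local code at all. It translates the \emph{global} index $a$ directly: since $a\in\Ipr$, every node of the proof is available primitive recursively as $\kl a(\sigma)$, and the translation $\kl e(a,\sigma)$ at each node only evaluates $\kl a$ at $\sigma$ and its finitely many children and otherwise just \emph{forms} s-m-n indices, never evaluating itself recursively. The deferral above each premise $\Gamma_i$ of a finitary rule is achieved not by materializing a premise but by a genuine $\omega$-rule with conclusion $\Gamma_i,\forall x(x\neq x)$ and premises $\weak(\kl e(a,\sigma\conc i),\bar n\neq\bar n)$, stored as an index only, which is then cut against $\Gamma_i,\exists x(x=x)$ derived from an axiom. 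This is what makes $\lambda\sigma.\kl e(a,\sigma)$, and hence every new $\omega$-index, primitive recursive. If you want to keep your two-step architecture, you would have to replace the vacuous $\omega$-rule by a genuine $\omega$-rule plus cut \emph{at the time the code is first built from the index}, which collapses your two steps into the paper's single construction.
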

\begin{proof}
The idea is to  apply the following transformation when $R$ is a finitary rule.
	\[ \rul{\Gamma_1\ \ \cdots \ \ \Gamma_k}{\Gamma}{R} \ \  \ 	\rightsquigarrow  \ \ 
	\ptree{\[ \[\Gamma_i, \bar n\neq \bar n \ \ \  (n\in\omega) \j \Gamma_i,\forall x(x\neq x) \using{\orule} \]  \ \ \[\Gamma_i, 0=0 \j \Gamma_i,\exists x(x=x) \using{\exists} \] \j
		\Gamma_i \ \ \  (i\in |R|) \using{\cut} \] \j \Gamma \using{R} } \]
	
By the recursion theorem, we can find an index $e$ of a partial recursive function such that 
	\begin{align*}
	\kl e(a,\sigma) &\simeq \begin{cases}  \kl a(\sigma) & \text{ if } \kl a(\sigma)=\langle\ax, \Gamma\rangle;\\
	\langle R,\Gamma,A, b_0,\ldots,b_{k-1}\rangle & \text{ if }  \kl a(\sigma)=\langle R, \Gamma,A\rangle;\\
	\langle \omega, \Gamma,\forall xA(x),\Lambda n.\kl e(a,\sigma\conc n)\rangle & \text{ if } 
	\kl a(\sigma)=\langle \omega,\Gamma,\forall xA(x)\rangle,
	\end{cases}
	\end{align*}
	where 
\[   b_i=\langle \cut,\Gamma_i, \forall x(x\neq x), b_{i0}, b_{i1}\rangle, \text{ with } \Gamma_i=\End(\kl a(\sigma\conc i)), \]
\[   b_{i0}= \langle \omega,\Gamma_i,\forall x(x\neq x), c_i\rangle, \text{ with } c_i=\Lambda n.\weak(\kl e(a,\sigma\conc i),\bar n\neq\bar n), \]
\[   b_{i1}= \langle \exists, \Gamma_i,\exists x(x=x), \langle \ax,\Gamma_i\cup 0=0\rangle\rangle. \] 

The partial recursive function $\vp$ is given by $\vp(a)\simeq \kl e(a,\langle\rangle)$.  Suppose $a\in\Ipr$. We claim that $\kl e(a,\langle\rangle)\in \Cpr$ is as desired. By  induction on the length of $\pi(a)$,  show that $\kl e(a,\sigma)\in \Cpr$ is an $\omega$-proof of $\End(\kl a(\sigma))$. The crucial point is that if $a\in\Ipr$, then $\lambda \sigma.\kl e(a,\sigma)$ is primitive recursive as well. 
\end{proof}

\begin{corollary}
	$\thm(\Cpr)=\thm(\Crec)$. 
\end{corollary}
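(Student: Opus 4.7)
The proof will be immediate from what has already been set up, so the plan is essentially to observe that both inclusions are at hand.

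For the easy direction $\thm(\Cpr)\subseteq\thm(\Crec)$, I would note that every primitive recursive function is recursive, and hence by a straightforward induction on the inductive definition of $\Cpr$ one obtains $\Cpr\subseteq\Crec$; taking end-sequents yields the desired inclusion on theorems.

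For the reverse inclusion $\thm(\Crec)\subseteq\thm(\Cpr)$, the plan is to go through the global-code world. Suppose $A\in\thm(\Crec)$. By Shoenfield's completeness theorem (Theorem \ref{shoenfield}), $A$ is a true arithmetical sentence, and moreover the proof-search argument there produces a \emph{primitive recursive} cut-free $\omega$-proof of $A$, namely the Schütte tree $\pi(A)$. Fix an index $a$ of $\pi(A)$, so that $a\in\Ipr$. Now apply Theorem \ref{pr index to pr code}: the partial recursive function $\vp$ there is defined at $a$, the value $\vp(a)$ belongs to $\Cpr$, and $\End(\vp(a))$ equals the end sequent of $\pi(A)$, which is $A$. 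Hence $A\in\thm(\Cpr)$.

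There is no real obstacle; the work was done in Theorem \ref{shoenfield} (to produce a primitive recursive $\omega$-proof of any true sentence) and in Theorem \ref{pr index to pr code} (to convert primitive recursive global codes into primitive recursive local codes, possibly introducing cuts). The one subtlety worth flagging is that one cannot conclude $\thm(\Cpr^-)=\thm(\Crec^-)$ from this argument, because the conversion in Theorem \ref{pr index to pr code} introduces cuts; the cut-free version is precisely what the later reduction of $\Crec$ to $\Cpr$ (Theorem \ref{thm pr complete}) is designed to handle, and is not needed for the present corollary.
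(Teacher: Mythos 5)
Your proof is correct and follows essentially the same route as the paper, whose own (very terse) justification is exactly that the Sch\"{u}tte tree of a true sentence is a primitive recursive $\omega$-proof, hence has an index in $\Ipr$, which Theorem \ref{pr index to pr code} converts into a local code in $\Cpr$ with the same end sequent. Your closing remark about cuts blocking the cut-free version is also accurate and matches the paper's subsequent discussion.
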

\begin{proof}
The  Sch\"{u}tte tree is primitive recursive and hence the Sch\"{u}tte tree of a true sentence has an index in $\Ipr$.
\end{proof}

The proof of Theorem \ref{pr index to pr code} is inspired by Sundholm \cite{S83}. Nonetheless, we do not use, quote, \lq\lq the full power of the recursion theorem\rq\rq.  That is, we do not exploit the fact that the recursion theorem holds uniformly in a primitive recursive way. This is because we are not using  a numbering $\{[a]\colon a\in\omega\}$ of the primitive recursive functions in defining  $\Cpr$ and $\Ipr$. We briefly outline how the proof would proceed in this setting. First, define a total recursive function $f(e,a,\sigma)$ corresponding to our $\kl e(a,\sigma)$  but use primitive recursive application $[e]$ and $[a]$ throughout. Note that $f$ is primitive recursive in the parameter $a$. That is,  $\lambda e\sigma.f(e,a,\sigma)$ is primitive recursive for every $a$.  The s-m-n theorem gives us a primitive recursive function $g$ such that $[g(a)](e,\sigma)=f(e,a,\sigma)$. By the effective primitive recursion theorem, there exists a primitive recursive function $h$ such that $[h(g(a))](\sigma)=[g(a)](h(g(a)),\sigma)$. Then the total recursive function $\lambda a.[h(g(a))](\langle\rangle)$ yields the desired transformation of $\Ipr$ into $\Cpr$. \smallskip
%\begin{theorem} There is a primitive recursive function $f$ such that for every index $a$,  \[ \kl {f(a)}(x_1,\ldots,x_n)\simeq \kl a(f(a),x_1,\ldots,x_n) \]  	\end{theorem}
%The interested reader is referred to Sundholm's thesis for a comparison.

As for the relation between  primitive recursive $\omega$-proofs  and $\omega$-proofs with a primitive recursive local code, we have already mentioned that neither class is contained into the other.  In one direction, note that the Sch\"{u}tte tree of any sentence is primitive recursive (even elementary recursive). However, the Sch\"{u}tte tree  of a true sentence, although  encodable in  $\Crec$,  need not have a local code in $\Cpr$.  To see this, consider a true $\Pi_2$ sentence asserting the totality of a recursive, although not primitive recursive, function.  Then  the Sch\"{u}tte tree gives rise to a primitive recursive $\omega$-proof not encodable in $\Cpr$. For the other direction, consider the following recursive $\omega$-proof.  Let $f$ be a two-argument Ackermannian function so that $f$ is not primitive recursive, but the $n$-branch $\lambda m.f(n,m)$ is primitive recursive for every $n$.   Let $A(x,y,z)$ be the formula $x=x\land y=y\land z=z$. We leave  as an exercise to show that the following $\omega$-proof is not primitive recursive and yet it can be given a  code in $\Cpr$. 

\[\begin{prooftree}
\[  \cdots  \[ \[   
\[ \bar n=\bar n \qquad \bar m=\bar m  \justifies  \bar n=\bar n\land \bar m=\bar m \using{\land} \]   \qquad  f\overline{(n,m})=f\overline{(n,m})   \justifies A(\bar n,\bar m, f\overline{(n,m})) \using{\land} \] 
\justifies  \exists zA(\bar n,\bar m,z) \using{\exists} \]  \cdots 
\justifies  \cdots \qquad \forall y\exists zA(\bar n,y,z) \qquad \cdots \using{\orule} \] 
\justifies 
\forall x\forall y\exists zA(x,y,z) \using{\orule}
\end{prooftree}\]
(Hint: the function $\lambda n.(\Lambda m.f(n,m))$ is primitive recursive.)

%\qquad \cdots \qquad

\subsection{Completeness of the cut free primitive recursive $\omega$-rule}\label{pr complete}
We have  established  $\thm(\Crec)=\thm(\Cpr)$  by using the completeness of $\Ipr$ and the fact that one can transform the index of  a primitive recursive $\omega$-proof into a code of $\Cpr$. The price to pay for this index-to-code conversion is the introduction of new cuts. We do not know whether one can obtain such a conversion without cuts.  We aim to prove that the cut free fragment $\Cpr^-$ of $\Cpr$ is complete for true arithmetic.  

Let us first consider the propaedeutic $\Pi_2$ case ad show that  every true $\Pi_2$ sentence has a primitive recursive cut free $\omega$-proof encodable in  $\Cpr^-$.\footnote{Note by contrast that the intuitionistic case is  another matter altogether. In fact, primitive recursive cut free $\omega$-proofs based on Gentzen two-sided intuitionistic sequent calculus ($\mathsf{LJ}$)  are not even complete for Heyting arithmetic $\ha$. This  is well known (cf.\ \cite{LE76}). In fact,  consider a provably recursive function of $\ha$ (equivalently $\pa$) which is not primitive recursive, and  note that from any primitive recursive intuitionistic cut free  $\omega$-proof of $\forall x\exists yA(x,y)$ one can extract a primitive recursive function $f$ such that $\forall xA(x,f(x))$ is true. In particular, cut elimination does not hold unless we include vacuous rules such as, e.g., the repetition rule.}
Let $\forall x\exists yA(x,y)$ be a true  $\Pi_2$  sentence.  For $\sigma=\langle n_0,\ldots,n_l\rangle\in\seq$, let $\Gamma_\sigma$ be the sequent \[ \forall x\exists yA(x,y),\exists yA(\bar n_0,y),\ldots,\exists yA(\bar n_l,y).\] 
Let $g$ be a primitive recursive function such that $g(\sigma,m)\in\Cpr$ is the code of a cut free $\omega$-proof of the sequent $\Gamma_\sigma,A(\bar n_0,\bar m)$ whenever  $\sigma\in\seq$ and  $A(\bar n_0,\bar m)$ is true.

Define $h$ as
\begin{align*}
h(e,\sigma)&\simeq\begin{cases}
0    &   \text{ if }  \sigma\notin\seq; \\
\langle \exists, \Gamma_\sigma,\exists yA(\bar n_0,y), g(\sigma,m)\rangle &  \text{ if } \sigma(0)=n_0 \text{ and }  m<|\sigma|  \\
 & \text{ is least such that }A(\bar n_0, \bar m) \\
&  \text{ holds}; \\
\langle \omega,\Gamma_\sigma,\forall x\exists yA(x,y),\Lambda n.\kl e(\sigma\conc n)\rangle & \text{ otherwise. }
\end{cases}
\end{align*}
The function $h$ is primitive recursive. By the recursion theorem, there is an index $e$ such that $\kl e(x)\simeq h(e,x)$ for every $x$. Therefore $e$ is the index of a primitive recursive function. Let 
\[ a=\langle \omega,\forall x\exists yA(x,y),\kl e(\langle\rangle)\rangle.\]
Then $a$ is as desired. 

We now turn to the general case.  We will obtain  the completeness of the cut free fragment $\Cpr^-$  by  showing how to convert codes in $\Crec$ into codes in $\Cpr$ that preserve cut free $\omega$-proofs (Theorem \ref{thm pr complete}). 

We first need a lemma. Let $\N^\N$ consist of all total functions from $\N$ to $\N$. 
\begin{definition}
For every $a\in\N$ and $f\colon \N\to \N$ define 
$a(f)=\langle a_0,a_1,\ldots\rangle\in\seq\cup\N^\N$ such that $a_0=a$ and, for every $i$,
\begin{itemize}
\item if $a_i$ is $\langle \land,\Gamma,A\land B,b_0,b_1\rangle$  or $\langle\cut, \Gamma,C,b_0,b_1\rangle$, then $a_{i+1}=b_{f(i)(\md 2)}$,
\item if $a_i$ is $\langle\lor, \Gamma,A\lor B,b\rangle$, then $a_{i+1}=b$,
\item if $a_i$ is $\langle\omega, \Gamma,\forall xA(x),e\rangle$ and $\kl e(f(i))$ is defined, then $a_{i+1}=\kl e(f(i))$,
\item $a_{i+1}$ is undefined and $a(f)=\langle a_0,\ldots,a_i\rangle$ otherwise.
\end{itemize}
\end{definition}

\begin{lemma}\label{lemma derivation}
If $a\in\Crec$ then $a(f)\in\seq$ for every  $f\colon \N\to \N$.  
\end{lemma}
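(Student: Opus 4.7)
The plan is to argue by induction on $|a|$, equivalently, by induction on the inductive build-up of $a\in\Crec$ given by Definition \ref{rec codes}. What needs to be shown is that the recursion defining $a(f)$ terminates after finitely many steps, so that $a(f)$ is a finite sequence rather than an element of $\N^\N$.

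For the base case, if $a=\langle \ax,\Gamma\rangle$, then $a_0=a$ matches none of the four positive clauses in the definition of $a(f)$ (it is not a $\land$/$\cut$, $\lor$, or $\omega$ code), so by the catch-all ``otherwise'' clause $a_1$ is undefined and $a(f)=\langle a\rangle\in\seq$. The same observation handles the case $a_0=\langle\exists,\Gamma,\exists xA(x),b\rangle$, which likewise falls into the ``otherwise'' clause.

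For the inductive step, suppose $a$ is any other code in $\Crec$ and set $f^+(i)=f(i+1)$. A direct inspection of the definition shows that the candidate value of $a_1$ is always a proper subcomponent appearing in $a$: one of $b_0,b_1$ when $a$ is a $\land$- or $\cut$-code, the unique premise $b$ when $a$ is a $\lor$-code, and $\kl e(f(0))$ when $a=\langle\omega,\Gamma,\forall xA(x),e\rangle$. In each of these cases the relevant inductive clause of Definition \ref{rec codes} guarantees $a_1\in\Crec$ with $|a_1|<|a|$; in the $\omega$-case this uses that $\kl e$ is total and $\kl e(n)\in\Crec$ for every $n$. Moreover, writing $c=a_1$, the recursion generating $c(f^+)$ proceeds through exactly the same sequence of values as $a(f)$ from position $1$ onward, so $a(f)=\langle a\rangle\conc c(f^+)$. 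By the induction hypothesis $c(f^+)\in\seq$, hence $a(f)\in\seq$ as well.

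There is no real obstacle here: the proof is just a routine unfolding of the two definitions side by side, with the only mild point of care being to confirm that every subcomponent used in forming $a_1$ is itself in $\Crec$, which is exactly what Definition \ref{rec codes} delivers. The lemma is essentially a packaging of the well-foundedness built into $\Crec$: any path one can trace through a code must be finite.
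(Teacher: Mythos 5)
Your proof is correct and follows the same route as the paper, which simply states that the argument is by induction on the code; you have filled in the details (the shift $f^+$, the identity $a(f)=\langle a\rangle\conc a_1(f^+)$, and the check that each candidate $a_1$ is a premise code in $\Crec$ of smaller length) accurately.
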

\begin{proof}
The proof is by induction on the code. 
\end{proof}

\begin{theorem}\label{thm pr complete}
There exists a primitive recursive function $f$ such that: 
\begin{enumerate}[(i)]
	\item $a\in\Crec$ if and only if $f(a)\in \Cpr$;
	\item $a\in\Crec^-$ if and only if $f(a)\in \Cpr^-$;
	\item $a\in \Crec$ implies $\End(a)=\End(f(a))$.
	%\item $a\in \Crec$ is cut free implies $f(a)\in \Cpr$ is cut free
\end{enumerate}	
\end{theorem}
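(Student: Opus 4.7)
The plan is to define $f$ by the primitive recursion theorem: let $f=\kl{e_0}$ with $\kl{e_0}(a)=G(e_0,a)$ for a suitable primitive recursive $G$. For $a=\langle\ax,\Gamma\rangle$ I would set $f(a)=a$, and for each of the rules $R\in\{\land,\lor,\exists,\cut\}$ let $G$ simply propagate $\kl{e_0}$ through the immediate sub-codes, so the top rule of $f(a)$ matches that of $a$. Properties (i)--(iii) in these cases follow at once by induction; in particular cut-freeness is preserved structurally, and the end sequent is unchanged.

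The crux is the case $a=\langle\omega,\Gamma,\forall xA(x),e\rangle$. The naive recipe $\kl{e'}(n)=f(\kl e(n))$ fails because $\kl e$ is only recursive, so composition with $f$ need not be primitive recursive. I would instead set
\[
f(a)=\langle\omega,\Gamma\cup\{\forall xA(x)\},\forall xA(x),\Lambda n.\,K(e_0,a,n,0)\rangle,
\]
staging the computation of $\kl e(n)$ by a step parameter $s$. Let $T(e,n,s)$ be Kleene's primitive recursive predicate ``$\kl e(n)$ halts within $s$ steps'' and $U(e,n,s)$ its output when $T$ fires. Define
\[
K(e_0,a,n,s)=\begin{cases}
\weak(\kl{e_0}(U(e,n,s)),\{\forall xA(x)\}) & \text{if } T(e,n,s),\\
\langle\omega,\Gamma\cup\{\forall xA(x),A(\bar n)\},\forall xA(x),e''\rangle & \text{otherwise,}
\end{cases}
\]
where $e''=\Lambda m.\,\weak(K(e_0,a,n,s+1),\{A(\bar m)\})$. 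All ingredients ($T$, $U$, $\weak$, $\Lambda$) are primitive recursive, so $K$ and $G$ are primitive recursive and every s-m-n index produced along the way is a primitive recursive index.

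Verification proceeds by transfinite induction: on $|a|$ for $a\in\Crec\Rightarrow f(a)\in\Cpr$, and on $|f(a)|$ for the converse. If $\kl e$ is total, for each $n$ let $s^*(n)$ be least with $T(e,n,s^*(n))$; the sub-tree of $f(a)$ under branch $n$ is then a finite tower of $\omega$-rules on $\forall xA(x)$ of height $s^*(n)$ topped by a weakening of $f(\kl e(n))\in\Cpr$ (inductive hypothesis on $|\kl e(n)|<|a|$). Conversely, if $f(a)\in\Cpr\subseteq\Crec$ and some $\kl e(n)$ diverged, the $K$-chain along branch $n$ would never terminate, producing an infinite navigation of $f(a)$ in contradiction with Lemma~\ref{lemma derivation}; hence $\kl e$ is total, and the topmost weakened $f(\kl e(n))\in\Cpr$ on each branch gives $\kl e(n)\in\Crec$ by induction on $|f(\kl e(n))|<|f(a)|$. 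End sequents are preserved because the outer pre-weakening by $\{\forall xA(x)\}$ is absorbed at the $\omega$-conclusion, and cut-freeness is preserved because $\weak$ introduces no cuts and $K$ introduces only $\omega$-rules.

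The main obstacle is precisely this $\omega$-rule case: a primitive recursive index must replace the possibly-non-primitive-recursive $\kl e$ inside the $\omega$-rule, and cut-freeness forbids the usual workaround of routing through cuts as in Theorem~\ref{pr index to pr code}. The key trick is the pre-weakening by $\{\forall xA(x)\}$, which keeps the main formula alive in every premise and thereby licenses an unbounded primitive recursive staircase of $\omega$-rules on $\forall xA(x)$. Termination of $T(e,n,\cdot)$ when $a\in\Crec$ caps every staircase at finite height, while non-termination of $T$ produces an ill-founded sub-tree that correctly witnesses $a\notin\Crec$ on the other side.
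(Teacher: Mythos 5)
Your construction is essentially the paper's: the finitary rules are propagated structurally, and the $\omega$-rule case is handled by a primitive recursive ``staircase'' of $\omega$-rules indexed by a step parameter for the computation of $\kl e(n)$, with the main formula $\forall xA(x)$ pre-weakened into the context to keep each stage locally correct, and with Lemma~\ref{lemma derivation} ruling out divergent branches in the converse direction. The two-stage use of the recursion theorem (an auxiliary $K$ with the outer index as a parameter, then the outer index itself) matches the paper's $g$ and $e$.

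There is, however, one concrete omission that breaks the right-to-left directions of (i) and (ii) as you have written them. In the halting clause of $K$ you return $\weak(\kl{e_0}(U(e,n,s)),\{\forall xA(x)\})$ unconditionally, whereas the paper additionally guards this clause with the test $\End(U(z))=\Gamma,A(\bar n)$, falling through to another $\omega$-stage when the test fails. Without the guard, membership of $f(a)$ in $\Cpr$ does not force $\End(\kl e(n))=\Gamma,A(\bar n)$: since your conclusion is $\Gamma\cup\{\forall xA(x)\}$ and the terminal subcode is weakened by $\{\forall xA(x)\}$, a premise code $\kl e(n)\in\Crec$ with end sequent $\Gamma,A(\bar n),\forall xA(x)$ (where $\forall xA(x)\notin\Gamma$) yields a locally correct $f(a)\in\Cpr$ even though $a=\langle\omega,\Gamma,\forall xA(x),e\rangle\notin\Crec$, because the extra occurrence of $\forall xA(x)$ is silently absorbed by your pre-weakening. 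Your converse argument only establishes $\kl e(n)\in\Crec$ and never verifies the end-sequent condition required by the $\omega$-clause of Definition~\ref{rec codes}. The fix is exactly the paper's: test the (primitively recursively readable) end sequent of $U(e,n,s)$ before committing, so that a mismatch sends the construction into an unending staircase and hence out of $\Cpr$. With that guard added, your proof goes through.
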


The idea is to delay the application of a rule preceding an $\omega$-inference by iterated applications of the $\omega$-rule. 
\[ \ptree{\[ \Gamma_i\ \ (i\in |R|)\ \  \j  \Gamma, A(\bar n)\ \  (n\in\omega) \using{R} \] \j \Gamma,\forall xA(x) \ \ \using{\orule}} \ \  \ 	\rightsquigarrow  \ \ \ 
\ptree{  \[\[\[\Gamma_i,\forall xA(x), A(\bar u),A(\bar v) \ \ (i\in|R|) \j 
	        \Gamma,\forall xA(x),A(\bar n),A(\bar u),A(\bar v)\ \ \ (v\in\omega) \using{R} \] \j
	      \Gamma,\forall xA(x),A(\bar n),A(\bar u) \ \ (u\in\omega)  \using{\orule} \] \j
	      \Gamma,\forall xA(x),A(\bar n)  \ \ (n\in\omega)  \using{\orule} \] \j
	      \Gamma,\forall xA(x)  \using{\orule}} \]
Suppose that the proof on the left-hand side has code  $\langle \omega, \Gamma,\forall xA(x),a\rangle$ in $\Crec$. Then for every $n$ we  iterate the $\omega$-rule until $\kl a(n)$ converges. For the sake of local correctness, we repeat the main formula $\forall xA(x)$.   Once we know $\kl a(n)$ we keep working on the proof coded by $\kl a(n)$. If it does not converge, the resulting (sub)proof  of $\Gamma,\forall xA(x),A(\bar n)$ on the right-hand side is ill founded but still locally correct. %This particular construction is supposed to work with codes. If we work with indices, then we can use repetition or some other improper rule. 

\begin{proof}
By Kleene's normal form theorem, there is a primitive recursive predicate $T$ such that
\[   \kl a(n)\simeq U(\mu z.T(a,n,z)), \]
where $U$ is primitive recursive. 

By the recursion theorem we can find a primitive recursive function $g$ such  that
\[ g(e,\Gamma,\Delta,\forall xA(x),a,n,z, u)\simeq \]
\begin{align*}
&\simeq  
\begin{cases}
\weak(\kl e(U(z)),\Delta\cup A(\bar u) \setminus A(\bar n))   \\
\qquad \text{ if } T(a,n,z) \text{ and } \End(U(z))=\Gamma,A(\bar n); \\
\langle \omega, \Delta\cup A(\bar n)\cup A(\bar u),\forall xA(x),\Lambda v.g(e,\Gamma,\Delta\cup A(\bar u),\forall xA(x), a,n,z+1, v)\rangle  \\
\qquad \text{ otherwise.}
\end{cases} 
\end{align*}

By another application of the recursion theorem, there is an index $e$ such that 
\begin{align*}
\kl e(\langle \ax,\Gamma\rangle) &\simeq \langle\ax, \Gamma\rangle,  \\
\kl e(\langle R,\Gamma,A,a_1,\ldots,a_n\rangle) &\simeq \langle R,\Gamma,A,\kl e(a_1),\ldots,\kl e(a_n)\rangle, \\
\kl e(\langle \omega,\Gamma,\forall xA(x),a\rangle)& \simeq \langle\omega, \Gamma\cup\forall xA(x),\forall xA(x),b \rangle,
\end{align*}
with 
\[ b= \Lambda n.g(e,\Gamma, \Gamma\cup\forall xA(x), \forall xA(x),a,n, 0, n). \]

By direct inspection, the index $e$ defines a primitive recursive function $f$.	

The proof that $f$ is as desired is by induction on (the length of) the codes.  We show items (i) and (iii). Item (ii) is analogous. The only case that requires some attention is when we are given a code $c$ of the form  $\langle\omega, \Gamma,\forall xA(x),a\rangle$. By definition,
\[  f(c)=\langle \omega,\Gamma\cup\forall xA(x),\forall xA(x),b\rangle, \]
where $b$ is as above. Note that $b$ is the index of a primitive recursive function.

Suppose \sloppy $c=\langle\omega, \Gamma,\forall xA(x),a\rangle\in\Crec$. Then $\kl a$ is total  and $\kl a(n)\in\Crec$ with end sequent $\Gamma,A(\bar n)$ for every $n$. By the induction hypothesis, $f(\kl a(n))\in \Cpr$ with end sequent $\Gamma,A(\bar n)$ and so $\weak(f(\kl a(n)),\Delta)\in\Cpr$ with end sequent $\Gamma,\Delta,A(\bar n)$ for every $\Delta$ and every $n$. Therefore it is sufficient to show that for every $n$ we have $g(e,\Gamma, \Gamma\cup\forall xA(x),\forall xA(x),a,n,0,n)\in \Cpr$ with end sequent $\Gamma,\forall xA(x),A(\bar n)$. Fix $n$ and let $z$ be least such that $T(a,n,z)$. Prove by induction on $i\leq z$ that for every $\Delta\supseteq \Gamma,\forall xA(x)$ and for every $u$
\[    g(e,\Gamma, \Delta,\forall xA(x), a,n,z-i,u)\in \Cpr \]
with end sequent $\Delta, A(\bar n),A(\bar u)$. For $i=0$, we have
  \[ g(e,\Gamma, \Delta,\forall xA(x), a,n,z,u)\simeq \\  \weak(f(\kl a(n)), \Delta\cup A(\bar u)\setminus A(\bar n)). \] 
This case follows by the above assumption on $f(\kl a(n))$. The case with $i+1\leq z$ follows by the induction hypothesis on $i$. In fact,
\begin{multline*} g(e,\Gamma, \Delta,\forall xA(x), a,n,z-i-1,u)\simeq \\ \langle \omega, \Delta\cup A(\bar n)\cup A(\bar u),\forall xA(x),\Lambda v.g(e,\Gamma,\Delta\cup A(\bar u),\forall xA(x), a,n,z-i, v)\rangle. \end{multline*}

For $i=z$, $\Delta=\Gamma\cup\forall xA(x)$ and $u=n$, we obtain the desired result. 

For the converse direction, let $c=\langle\omega, \Gamma,\forall xA(x),a\rangle$ and suppose $f(c)\in\Cpr$. We aim to show that $\kl a$ is total  and $\kl a(n)\in \Crec$ with end sequent $\Gamma,A(\bar n)$ for every $n$.  Fix $n$. We know that $\kl b(n)$ is defined and  $\kl b(n)\in\Cpr$. We claim that $\kl a(n)$ is defined  with $\End(\kl a(n))= \Gamma,A(\bar n)$. Suppose not.  Fix any function $h\colon \N\to \N$.  Then $(\kl b(n))(h)=\langle b_{0},b_{1}\ldots \rangle$ is infinite and every $b_i$ is of the form $\langle \omega,\Delta,\forall xA(x),c_i\rangle$. This is seen by induction on $i$. By Lemma \ref{lemma derivation}, we obtain that $\kl b(n)\notin\Cpr$, a contradiction. 
Hence $\kl a(n)$ is defined and $\End(\kl a(n))=\Gamma,A(\bar n)$ for every $n$.  It remains to show that $\kl a(n)\in \Crec$.  By construction (see the first clause in the definition of $g$), there is a $\Delta$ such that $\weak(f(\kl a(n)), \Delta)$ is a subcode of $f(c)$. Therefore $\weak(f(\kl a(n)), \Delta)\in\Cpr$ and so is  $f(\kl a(n))$. Since $|f(\kl a(n))|=|\weak(f(\kl a(n)), \Delta\})|<|f(c)|$, we can apply the induction hypothesis and conclude $\kl a(n)\in \Crec$, as desired. \end{proof}

Note that for every $a\in\Crec$,  $|a|\leq |f(a)|$,  and every rule appearing in $\pi(f(a))$ already occurs in $\pi(a)$.

\begin{corollary}
For every true arithmetical sentence $A$ there is primitive recursive local code $a\in\Cpr^-$ such that $\pi(a)$ is a primitive recursive cut free $\omega$-proof of $A$. Therefore, $\thm(\Crec)=\thm(\Cpr^-)$  coincides with the set of true arithmetical sentences.
\end{corollary}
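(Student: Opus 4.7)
The plan is to chain Theorem \ref{shoenfield} with Theorem \ref{thm pr complete}. Given a true arithmetical sentence $A$, Shoenfield's theorem supplies a recursive local code $a \in \Crec^-$ such that $\pi(a)$ is the Sch\"{u}tte tree of $A$, which is a primitive recursive cut free $\omega$-proof of $A$. Applying the primitive recursive function $f$ from Theorem \ref{thm pr complete}, items (ii) and (iii) yield a primitive recursive local code $f(a) \in \Cpr^-$ with $\End(f(a)) = \End(a) = A$.

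This already gives the equality $\thm(\Crec) = \thm(\Cpr^-)$. The inclusion $\thm(\Cpr^-) \subseteq \thm(\Crec)$ is immediate from $\Cpr^- \subseteq \Crec$, and by Theorem \ref{shoenfield} this common set coincides with true arithmetic; for the reverse inclusion, every $B \in \thm(\Crec)$ is true, and the construction above produces a code in $\Cpr^-$ with end sequent $B$.

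What remains is to verify that $\pi(f(a))$ is itself a primitive recursive $\omega$-proof, not merely a recursive one. Section \ref{index vs code} warns that $\Cpr^-$-membership alone does not entail primitive recursive underlying $\omega$-proof, so this step requires care. The key observation is that, by inspection of the proof of Theorem \ref{thm pr complete}, every index appearing in $f(a)$ is of the form $\Lambda n.\,g(\ldots,n)$ for the explicit primitive recursive function $g$ used to define $f$ (together with the outer schema indexing $f$ itself). Consequently, evaluating $\pi(f(a))(\sigma)$ amounts to walking through $f(a)$ in at most $|\sigma|$ steps: each step either resolves a Kleene application $\kl{b}(m)$ to a direct primitive recursive evaluation of $g$, or makes a recursive call to $f$ on a subcode of $a$, at which point we can invoke the primitive recursiveness of the Sch\"{u}tte tree $\pi(a)$. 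In particular, the nested $T$-search $\mu z.\,T(a,n,z)$ introduced by $f$ along branch $n$ is encountered only one $z$ at a time along a branch of $T_{\pi(f(a))}$, so the total number of primitive recursive operations at node $\sigma$ is bounded by $|\sigma|$.

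The main obstacle is precisely the bookkeeping in this last paragraph: turning the informal walk into a uniform primitive recursive definition of $\sigma \mapsto \pi(f(a))(\sigma)$, and checking that the $T$-search contributes only boundedly many steps relative to $|\sigma|$, so that the overall computation fits within a primitive recursive schema. Once that uniformity is established, primitive recursiveness follows by iteration on $|\sigma|$, and the corollary is complete.
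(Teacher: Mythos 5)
Your proposal follows the paper's proof exactly: chain Theorem \ref{shoenfield} with Theorem \ref{thm pr complete}, then verify that $\pi(f(a))$ is primitive recursive --- a step the paper explicitly leaves as an exercise, and which your delayed-$T$-search bookkeeping (only $z\leq|\sigma|$ many checks of $T(a,n,z)$ are needed to compute the node $\sigma$) sketches correctly. One small remark: the appeal to the primitive recursiveness of the Sch\"{u}tte tree $\pi(a)$ is unnecessary, since, as the paper notes, $\pi(f(a))$ is primitive recursive for \emph{every} $a\in\Crec$ whether or not $\pi(a)$ is; the recursive calls to $f$ on subcodes are already primitive recursive because $f$ itself is.
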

\begin{proof}
It follows from Theorem \ref{shoenfield} and Theorem \ref{thm pr complete}.
The only part that requires some explanation is the claim  that the $\omega$-proof encodable in $\Cpr^-$ is indeed primitive recursive. In fact, we have already observed that primitive recursive local codes do not correspond to primitive recursive $\omega$-proofs. Now, by Theorem \ref{shoenfield}, a true sentence $A$ has a primitive recursive cut free $\omega$-proof $\pi$ with a local code $a$ in $\Crec^-$. It follows by Theorem \ref{thm pr complete} that $f(a)\in\Cpr^-$ is a local code of some recursive $\omega$-proof $\pi(f(a))$ of $A$.  The point is that the $\omega$-proof $\pi(f(a))$ is primitive recursive for every $a\in\Crec$ (be $\pi(a)$ primitive recursive or not).  We leave this as an exercise for the reader.  
\end{proof}

\begin{corollary}
$\Cpr$ and $\Cpr^-$ are $\Pi^1_1$ complete.
\end{corollary}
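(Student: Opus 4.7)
The plan is to chain together the two main hardness/translation results already proved. Theorem~\ref{thm rec complete} supplies a primitive recursive function $g$ such that $a\in\klo \iff g(a)\in\Crec$, and moreover $g(a)\in\Crec^-$ whenever $a\in\klo$. Theorem~\ref{thm pr complete} supplies a primitive recursive function $f$ such that $a\in\Crec \iff f(a)\in\Cpr$ and $a\in\Crec^- \iff f(a)\in\Cpr^-$. Composing, $f\circ g$ is a primitive recursive function witnessing $a\in\klo \iff f(g(a))\in\Cpr$: in the forward direction $a\in\klo$ forces $g(a)\in\Crec^-\subseteq\Crec$ and hence $f(g(a))\in\Cpr^-\subseteq\Cpr$; in the reverse direction $f(g(a))\in\Cpr$ forces $g(a)\in\Crec$ by clause (i) of Theorem~\ref{thm pr complete}, and then $a\in\klo$ by Theorem~\ref{thm rec complete}. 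Since the forward direction actually lands in $\Cpr^-$ and the reverse direction works identically starting from $f(g(a))\in\Cpr^-$ (using clause (ii) of Theorem~\ref{thm pr complete}), the same $f\circ g$ also reduces $\klo$ to $\Cpr^-$. This gives $\Pi^1_1$-hardness of both $\Cpr$ and $\Cpr^-$.

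For the matching upper bound, $\Cpr$ is given by a positive inductive definition whose clauses, as in Definition~\ref{rec codes}, involve only arithmetical conditions on previously-constructed members (the $\omega$-clause additionally requires that the index $e$ denote a primitive recursive function, which is arithmetical in either of the two conventions discussed in the paper). The least fixed point of such an arithmetical monotone inductive definition is $\Pi^1_1$ by the standard theory of arithmetical inductive definitions, exactly as for $\klo$ and $\Crec$. The cut-free fragment $\Cpr^-$ is obtained simply by omitting the cut clause from the definition, so it too is $\Pi^1_1$. Combining this with the hardness reduction above yields the corollary.

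No real obstacle arises: all the substantial work has been absorbed into Theorems~\ref{thm rec complete} and~\ref{thm pr complete}. The only point requiring mild vigilance is to carry the cut-free bookkeeping through the composition, which is precisely why clause~(ii) of Theorem~\ref{thm pr complete} was stated in the biconditional form $a\in\Crec^- \iff f(a)\in\Cpr^-$: this is what lets the $\Pi^1_1$-hardness reduction from $\klo$ transfer to $\Cpr^-$ and not merely to $\Cpr$.
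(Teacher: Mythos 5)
Your proposal is correct and matches the paper's own argument, which likewise obtains the corollary by composing the reduction of $\klo$ to $\Crec$ (and $\Crec^-$) from Theorem~\ref{thm rec complete} with the reduction of Theorem~\ref{thm pr complete}; the paper leaves the composition and the $\Pi^1_1$ upper bound implicit, while you spell them out. Nothing further is needed.
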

\begin{proof}
It follows from Theorem \ref{thm rec complete} and Theorem \ref{thm pr complete}.
\end{proof}

%\bibliographystyle{plain}
%\bibliography{omega}{} % put file name with no extension

\begin{thebibliography}{10}
	
	\bibitem{B91}
	Wilfried Buchholz.
	\newblock Notation systems for infinitary derivations.
	\newblock {\em Archive for Mathematical Logic}, 30(5):277--296, 1991.
	
	\bibitem{F62}
	Solomon Feferman.
	\newblock Transfinite recursive progressions of axiomatic theories.
	\newblock {\em The Journal of Symbolic Logic}, 27(3):259--316, 1962.
	
	\bibitem{Fen68}
	Jens~Erik Fenstad.
	\newblock On the completeness of some transfinite recursive progressions of
	axiomatic theories.
	\newblock {\em The Journal of Symbolic Logic}, 33(1):69--76, 1968.
	
	\bibitem{Fra04}
	Torkel Franz{\'e}n.
	\newblock Transfinite progressions: a second look at completeness.
	\newblock {\em Bulletin of Symbolic Logic}, 10(3):367--389, 2004.
	
	\bibitem{G87}
	Jean-Yves Girard.
	\newblock {\em Proof theory and logical complexity}, volume~1 of {\em Studies
		in Proof Theory. Monographs}.
	\newblock Bibliopolis, Naples, 1987.
	
	\bibitem{K58}
	Stephen~C. Kleene.
	\newblock Extension of an effectively generated class of functions by
	enumeration.
	\newblock In {\em Colloquium Mathematicum}, volume~6, pages 67--78. Instytut
	Matematyczny Polskiej Akademii Nauk, 1958.
	
	\bibitem{LE76}
	E.G.K. Lopez-Escobar.
	\newblock On an extremely restricted $\omega$-rule.
	\newblock {\em Fundamenta Mathematicae}, 2(90):159--172, 1976.
	
	\bibitem{M78}
	Grigori Mints.
	\newblock Finite investigations of transfinite derivations.
	\newblock {\em Journal of Soviet Mathematics}, 10(4):548--596, 1978.
	
	\bibitem{S50}
	Kurt Sch{\"u}tte.
	\newblock Beweistheoretische erfassung der unendlichen induktion in der
	zahlentheorie.
	\newblock {\em Mathematische Annalen}, 122(5):369--389, 1950.
	
	\bibitem{S77}
	Kurt Sch\"{u}tte.
	\newblock {\em Proof theory}.
	\newblock Springer-Verlag, Berlin-New York, 1977.
	\newblock Translated from the revised German edition by J. N. Crossley,
	Grundlehren der Mathematischen Wissenschaften, Band 225.
	
	\bibitem{Sho59}
	Joseph~R. Shoenfield.
	\newblock On a restricted {$\omega $}-rule.
	\newblock {\em Bull. Acad. Polon. Sci. S\'{e}r. Sci. Math. Astr. Phys.},
	7:405--407. (unbound insert), 1959.
	
	\bibitem{S83}
	Bo~G{\"o}ran Sundholm.
	\newblock {\em Proof Theory: a survey of the $\omega$-rule}.
	\newblock PhD thesis, University of Oxford, 1983.
	
\end{thebibliography}
%\bibliographystyle{plain}

\end{document}